\newtheorem{Theorem}{Theorem}[section]
\newtheorem{Lemma}[Theorem]{Lemma}
\newtheorem{Corollary}[Theorem]{Corollary}
\newtheorem{Proposition}[Theorem]{Proposition}
\newtheorem{Remark}[Theorem]{Remark}
\newtheorem{Example}[Theorem]{Example}
\newtheorem{Question}[Theorem]{Question}
\def\qed{\ifhmode\textqed\fi
	\ifmmode\ifinner\quad\qedsymbol\else\dispqed\fi\fi}
\def\textqed{\unskip\nobreak\penalty50
	\hskip2em\hbox{}\nobreak\hfill\qedsymbol
	\parfillskip=0pt \finalhyphendemerits=0}
\def\dispqed{\rlap{\qquad\qedsymbol}}
\def\m{\mathfrak{m}}
\def\n{\mathfrak{n}}
\def\height{\textup{height}}
\def\Ass{\textup{Ass}}
\def\Tor{\textup{Tor}}
\def\depth{\textup{depth\,}}
\def\lcm{\textup{lcm}}
\def\pd{\textup{proj\,dim}}
\def\supp{\textup{supp}}
\def\reg{\textup{reg}}
\def\lex{\textup{lex}}
\def\girth{\textup{girth}}
\begin{document}
	
	\title{Complementary edge ideals}
	\author{Antonino Ficarra, Somayeh Moradi}
	
	\address{Antonino Ficarra, BCAM -- Basque Center for Applied Mathematics, Mazarredo 14, 48009 Bilbao, Basque Country -- Spain, Ikerbasque, Basque Foundation for Science, Plaza Euskadi 5, 48009 Bilbao, Basque Country -- Spain}
	\email{aficarra@bcamath.org,\,\,\,\,\,\,\,\,\,\,\,\,\,antficarra@unime.it}
	
	\address{Somayeh Moradi, Department of Mathematics, Faculty of Science, Ilam University, P.O.Box 69315-516, Ilam, Iran}
	\email{so.moradi@ilam.ac.ir}
	
	\subjclass[2020]{Primary 13D02, 13C05, 13A02; Secondary 05E40}
	\keywords{Complementary edge ideals, complementary cover ideals, powers of ideals}
	
	\begin{abstract}
		Let $S=K[x_1,\dots,x_n]$ be the polynomial ring over a field $K$ and $I\subset S$ be a squarefree monomial ideal generated in degree $n-2$.  Motivated by the remarkable behavior of the powers of $I$ when 
		$I$ admits a linear resolution, as established in \cite{F25b}, in this work we investigate the algebraic and homological properties of 
		$I$ and its powers.
		To this end, we introduce the complementary edge ideal of a finite simple graph $G$ as the ideal   $$I_c(G)=((x_1\cdots x_n)/(x_ix_j):\{i,j\}\in E(G)) $$
		of $S$, where $V(G)=\{1,\ldots,n\}$ and $E(G)$ is the edge set of $G$. By interpreting any squarefree monomial ideal $I$ generated in degree $n-2$ as the complementary edge ideal of a graph $G$, we establish a correspondence between algebraic invariants of $I$ and combinatorial properties of $G$. More precisely, we characterize sequentially Cohen-Macaulay, Cohen-Macaulay, Gorenstein, nearly Gorenstein and matroidal complementary edge ideals. Moreover, we determine the regularity of powers of $I$ in terms of combinatorial invariants of the graph $G$ and obtain that $I^k$ has linear resolution or linear quotients for some $k$ (equivalently for all $k\geq 1$) if and only if $G$ has only one connected component with at least two vertices.
	\end{abstract}
	
	\maketitle\vspace*{-1.4em}
	\section*{Introduction}
	Let $S = K[x_1, \dots, x_n]$ be the standard graded polynomial ring over a field $K$, and let $I \subset S$ be a squarefree monomial ideal. Characterizing when $I$ has a linear resolution is a classical and central problem in Commutative Algebra. Let $\mathcal{I}_{n,d}(K)$ denote the class of squarefree monomial ideals in $S$ having a $d$-linear resolution.
	
	In our previous work \cite{F25b}, we proved that the class $\mathcal{I}_{n,d}(K)$ is independent of the ground field $K$ if and only if $d\in\{0,1,2,n{-}2,n{-}1,n\}$. That is, a uniform combinatorial description of the linearity of the resolution exists for all squarefree monomial ideals of degree $d$ only in these six exceptional cases. Among them, the cases $d=0,1,n{-}1,n$ are trivial: every ideal generated in those degrees has linear resolution. The case $d=2$ was addressed by Fröberg in his seminal work \cite{Fr}, and Herzog, Hibi, and Zheng later established in \cite{HHZ} that, if $I$ has a $2$-linear resolution, then all powers $I^k$ have $2k$-linear resolutions for every $k\ge1$.
	
	In \cite{F25b}, we extended this phenomenon to all degrees $d\in\{0,1,2,n{-}2,n{-}1,n\}$, showing that the corresponding ideals in $\mathcal{I}_{n,d}(K)$ have powers with linear resolutions. Our main contribution in that work was the treatment of the case $d=n{-}2$, which we identified as, in a precise sense, complementary to the classical case $d=2$.
	
	To formalize this idea, we introduce the following notion. Given a non-zero squarefree monomial ideal $I\subset S$ with unique minimal monomial generating set $\mathcal{G}(I)=\{u_1,\dots,u_m\}$, we define its \textit{complementary ideal} as the ideal
	\[
	I_c(I)\ =\ ((x_1\cdots x_n)/u_1,\dots,(x_1\cdots x_n)/u_m).
	\]
	
	If $I=(0)$, we put $I_c(I)=(0)$. This construction defines an involution on the set of squarefree monomial ideals and plays a crucial role in our framework.
	
	Now, let $G$ be a finite simple graph on the vertex set $V(G)=[n]=\{1,2,\dots,n\}$. The \emph{edge ideal} $I(G)$ of $G$ is the squarefree monomial ideal of $S$ generated by the monomials $x_ix_j$ for each edge $\{i,j\}\in E(G)$. Edge ideals have been extensively studied, and their theory is well-developed \cite{HHBook,RV}.
	
	Any squarefree monomial ideal $I \subset S$ generated in degree $d=n{-}2$ is the complementary ideal of an edge ideal. In \cite[Theorem B]{F25b}, we showed that $I\in\mathcal{I}_{n,n{-}2}$ if and only if $I=I_c(I(G))$ for some graph $G$ which has only one connected component with more than one vertex. Moreover, in this case, we proved that $I^k$ has not only linear resolution, but also linear quotients, for all $k\ge1$, complementing the celebrated result of Herzog, Hibi and Zheng \cite{HHZ}.
	
	In the present work, we further explore this connection. Given a finite simple graph $G$, we introduce the \emph{complementary edge ideal} of $G$, as the ideal  
	\[
	I_c(G)\ =\ ((x_1\cdots x_n)/(x_ix_j)\ :\ \{i,j\}\in E(G)) 
	\]
	of $S$.
	These ideals naturally appear as the Alexander duals of the Stanley-Reisner ideals of pure one-dimensional simplicial complexes. We denote the Alexander dual $I_c(G)^\vee$ of $I_c(G)$ by $J_c(G)$, and refer to it as the \emph{complementary cover ideal} of $G$.
	
	While Stanley-Reisner ideals of (pure) one-dimensional simplicial complexes have received some attention, primarily from topological and combinatorial perspectives (see \cite{HT,MV23}), little is known about the ideals $I_c(G)$ and $J_c(G)$ when viewed directly in terms of the underlying graph $G$. This paper initiates a systematic study of these ideals, focusing on their algebraic and homological features. In particular, we determine their minimal primary decompositions, characterize when they are (sequentially) Cohen-Macaulay or (nearly) Gorenstein, and we compute explicitly the Castelnuovo-Mumford regularity of each of their powers.
	
	Our paper is structured as follows. In Section \ref{sec1} we study general properties of complementary ideals. In Proposition \ref{prop:I_c-properties}, we notice that $I_c(I_c(I))=I$, that is the operation $I_c(\mkern1mu\underline{\hspace{0.2cm}}\mkern1mu)$ defines an involution on the set of squarefree monomial ideals of $S$. In general, the properties of having linear quotients or having linear resolution are not preserved by taking the complementary ideal (Example \ref{ex:Ic}). In Theorem \ref{thm:Icpolym} we prove that $I_c(\mkern1mu\underline{\hspace{0.2cm}}\mkern1mu)$ preserves the matroidal property, and hence the linear quotients property for the family of matroidal ideals.
	
	In Section \ref{sec2}, we introduce complementary edge ideals. Notice that, if $G$ contains an isolated vertex, say $n$, and if we let $H$ to be induced subgraph of $G$ on the vertex set $[n-1]$, then $I_c(G)=x_n I_c(H)$ where
	\begin{equation}\label{eq:IcH}
		I_c(H)=((x_1\cdots x_{n-1})/(x_ix_j)\ :\ \{i,j\}\in E(H)).
	\end{equation}
	
	It is easy to recover the algebraic and homological properties of $I_c(G)$ in terms of $I_c(H)$. For instance, in the case above, if $I_c(H)=\bigcap_{i=1}^tP_i$ is the minimal primary decomposition of $I_c(H)$, then $I_c(G)=(x_n)\cap(\bigcap_{i=1}^tP_i)$ is the minimal primary decomposition of $I_c(G)$. Therefore, for the whole Section \ref{sec2} we assume that $G$ does not contain isolated vertices. In Theorem \ref{thm:I_c(G)pd}, for such a graph $G$, we prove that:
	\[
	J_c(G)\ =\ I(G^c) + K_3(G),
	\]
	where $G^c$ is the complementary graph of $G$, and $K_3(G)$ denotes the $3$-clique ideal generated by all triangles of $G$ \cite{Mor}. From this identity we see that the equality $I_c(G)^\vee = I(G^c)$ holds if and only if $G$ is $K_3$-free (Corollary \ref{cor:Ic(G)unmixed}).
	
	Further building on such relation between $J_c(G)$ and $I_c(G)$, we characterize the sequentially Cohen-Macaulay complementary edge ideals. In Theorem \ref{thm:Ic(G)sCM} we prove that $I_c(G)$ is sequentially Cohen-Macaulay if and only if $G$ is a chordal graph. In this case, by Alexander duality, $J_c(G)$ is componentwise linear. In fact, we even prove that $J_c(G)$ has linear quotients. When $G$ is chordal, due to some experimental evidence, we expect that $J_c(G)^k$ has linear quotients for all $k\ge1$ (Question \ref{que:Jc}). In Theorem~\ref{thm:Ic(G)CM} we establish that $I_c(G)$ is Cohen-Macaulay if and only if $G$ is either the complete graph on $n$ vertices or a forest. In Theorem \ref{thm:IcGor} we prove that $I_c(G)$ is Gorenstein, if and only if, $G\in\{K_2,\,K_3,\,2K_2,\,P_3\}$. Moreover, in Corollary \ref{cor:Ic(G)Gor} we prove that $I_c(G)$ is nearly Gorenstein, but not Gorenstein, if and only if, $G\in\{K_4,\,P_4\}$. Here $K_s$ denotes a complete graph on $s$ vertices, $P_s$ denotes a path on $s$ vertices, and $2K_2$ denotes the disjoint union of two edges.
	
	In Section \ref{sec3} we recover and strengthen a result due, independently, to Blum \cite{B}, Ohsugi and Hibi \cite{OH}, Nasernejad, Khashyarmanesh and Qureshi \cite{KNQ}. Using Theorem \ref{thm:Icpolym}, in Theorem \ref{thm:Ic(G)polym} we prove that $I(G)$ is matroidal, if and only if, $I_c(G)$ is matroidal, if and only if, $G$ is a complete multipartite graph.
	
	Finally, in the last section we compute explicit regularity functions for both $I_c(G)$ and $J_c(G)$. The case of complementary cover ideals was already addressed by Hoa and Trung \cite[Theorem 2.9]{HT}, and by Minh and Vu \cite[Theorem 1.1]{MV23}. In that case, the authors even computed the regularity of each symbolic power of $J_c(G)$ and showed that $\reg\,J_c(G)^{(k)}=\reg\,J_c(G)^k$ for all $k\ge1$.
	
	Now, let $c(G)$ denote the number of connected components of $G$ which are not isolated vertices. For arbitrary $G$, in Theorem \ref{thm:Ic(G)regdepth} we prove that
	$$
	\reg\,I_c(G)^k\ =\
	\begin{cases}
		(|V(G)| - 1)k & \text{if } k \leq c(G) - 2, \\
		(|V(G)| - 2)k + c(G) - 1 & \text{if } k \geq c(G) - 1.
	\end{cases}
	$$
	
	We also show that the depth function $k\mapsto\depth\,S/I_c(G)^k$ is non-increasing. The proof of these results heavily depends on the computation of explicit Betti splittings of $I_c(G)^k$ and also on \cite[Theorem B]{F25b} where the function $k\mapsto\reg\,I_c(G)^k$ is computed in the case that $c(G)=1$. As a nice consequence, in Corollary \ref{cor:Ic(G)Betti} we show that the graded Betti numbers of $I_c(G)^k$ do not depend on the ground field $K$, for all $k\ge1$. Whether the functions $k\mapsto\!\depth\,S/J_c(G)^k$ and $k\mapsto\!\depth\,S/J_c(G)^{(k)}$ are non-increasing as well, or whether we have $\reg\,I_c(G)^k=\reg\,I_c(G)^{(k)}$ for all $k\ge1$, remain open questions at the moment.
	
	Complementary edge ideals have been introduced independently in \cite{HQS}. Section 2 of this work overlaps with Section 1 of the paper \cite{HQS}.
	
	\section{Complementary ideals}\label{sec1}
	
	Let $I\subset S$ be a monomial ideal with $\mathcal{G}(I)$ its minimal monomial generating set. The \textit{support} of a monomial $u\in S$ is defined as the set $\supp(u)=\{i:\ x_i\ \textup{divides}\ u\}$. Whereas, the \textit{support} of $I$ is defined as $\supp(I)=\bigcup_{u\in\mathcal{G}(I)}\supp(u)$.
	
	Given $F\subset[n]$ non-empty, we put ${\bf x}_F=\prod_{i\in F}x_i$. Moreover, we put ${\bf x}_\emptyset=1$.\medskip
	
	The \textit{complementary ideal} of a non-zero squarefree monomial ideal $I\subset S$ is defined as the squarefree monomial ideal
	$$
	I_c(I)\ =\ ({\bf x}_{[n]}/u\ :\ u\in\mathcal{G}(I)).
	$$
	
	If $I=(0)$, we put $I_c(I)=(0)$.
	More generally, in \cite{ALS} the concept of generalized Newton complementary dual ideals were introduced which associates the complementary dual ideal to any monomial ideal, in order to study  cellular free resolutions of duals of some families of monomial ideals. 
	
	Notice that ${\bf x}_F$ divides ${\bf x}_G$ if and only if ${\bf x}_{[n]\setminus G}$ divides ${\bf x}_{[n]\setminus F}$. As a consequence, the following properties are easily verified.
	\begin{Proposition}\label{prop:I_c-properties}
		Let $I,J\subset S$ be non-zero squarefree monomial ideals. The following properties hold.
		\begin{enumerate}
			\item[\textup{(a)}] $\mathcal{G}(I_c(I))=\{{\bf x}_{[n]}/u\ :\ u\in\mathcal{G}(I)\}$.
			\item[\textup{(b)}] $I_c(I_c(I))=I$.
		\end{enumerate}
	\end{Proposition}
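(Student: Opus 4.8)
The plan is to derive both parts directly from the divisibility observation recorded just above the statement, namely that for subsets $F,G\subseteq[n]$ one has ${\bf x}_F\mid{\bf x}_G$ if and only if ${\bf x}_{[n]\setminus G}\mid{\bf x}_{[n]\setminus F}$. Throughout I would use that each $u\in\mathcal{G}(I)$ is squarefree, so $u={\bf x}_{\supp(u)}$ and consequently ${\bf x}_{[n]}/u={\bf x}_{[n]\setminus\supp(u)}$.

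For part (a), by the definition of the complementary ideal, $I_c(I)$ is generated by the set $\{{\bf x}_{[n]}/u:u\in\mathcal{G}(I)\}$; the content of (a) is that this generating set is already minimal, i.e.\ no element divides another. Suppose ${\bf x}_{[n]}/u$ divides ${\bf x}_{[n]}/v$ for some $u,v\in\mathcal{G}(I)$. Writing this as ${\bf x}_{[n]\setminus\supp(u)}\mid{\bf x}_{[n]\setminus\supp(v)}$ and applying the observation with $F=[n]\setminus\supp(u)$ and $G=[n]\setminus\supp(v)$, I would conclude that ${\bf x}_{\supp(v)}={\bf x}_G{}^{\!c}$-complement divides ${\bf x}_{\supp(u)}$, that is, $v\mid u$. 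Since $u$ and $v$ both lie in the minimal generating set $\mathcal{G}(I)$, the relation $v\mid u$ forces $u=v$. Hence there are no proper divisibilities among the proposed generators, and they constitute $\mathcal{G}(I_c(I))$.

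For part (b) I would simply apply (a) twice. By (a), $\mathcal{G}(I_c(I))=\{{\bf x}_{[n]}/u:u\in\mathcal{G}(I)\}$, and applying the definition once more, $I_c(I_c(I))$ is generated by $\{{\bf x}_{[n]}/w:w\in\mathcal{G}(I_c(I))\}$. For each generator $w={\bf x}_{[n]}/u$ with $u\in\mathcal{G}(I)$, the squarefreeness of $u$ gives ${\bf x}_{[n]}/w={\bf x}_{[n]}/({\bf x}_{[n]}/u)=u$. Thus the generators of $I_c(I_c(I))$ are exactly the elements of $\mathcal{G}(I)$, whence $I_c(I_c(I))=I$.

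I do not expect a genuine obstacle here, as the computation in (b) is immediate once (a) is in hand. The only point requiring care is the minimality assertion in (a): one must rule out any hidden divisibility among the complementary generators, and this is precisely what the order-reversing behaviour of complementation in the observation achieves, converting a putative divisibility among the ${\bf x}_{[n]}/u$ back into one among the $u\in\mathcal{G}(I)$, where it is excluded by minimality of $\mathcal{G}(I)$.
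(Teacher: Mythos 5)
Your proof is correct and follows exactly the route the paper intends: the paper records the observation that ${\bf x}_F$ divides ${\bf x}_G$ if and only if ${\bf x}_{[n]\setminus G}$ divides ${\bf x}_{[n]\setminus F}$ and declares both properties ``easily verified'' from it, which is precisely the argument you spell out (minimality of the complementary generators in (a), and the involution computation ${\bf x}_{[n]}/({\bf x}_{[n]}/u)=u$ in (b)). No issues.
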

	
	We say that a homogeneous ideal $I\subset S$ has \textit{linear resolution} if $I$ is generated in a single degree $d$ and the $i$th syzygy module of $I$ is either zero or generated in degree $d+i$, for all $i>0$. 
	
	Now, let $I\subset S$ be a monomial ideal. We say that $I$ has \textit{linear quotients} if there exists an order $u_1,\ldots,u_m$ of the set $\mathcal{G}(I)$ such that $(u_1,\ldots,u_{i-1}):(u_i)$ is generated by variables, for all $i = 2,\ldots,m$. In the following, for two monomials $u$ and $v$, we set $u:v=u/\gcd(u,v)=\lcm(u,v)/v$. Notice that
	$$(u_1,\ldots,u_{i-1}):(u_i)=(u_1:u_i,\dots,u_{i-1}:u_i).$$
	
	From this equation, it follows that $I$ has linear quotients order $u_1,\dots,u_m$, if and only if for all generators $u_j:u_i$ with $j<i$ there exists $h<i$ such that $u_h:u_i=x_p$ is a variable that divides $u_j:u_i$, see \cite[Corollary 8.2.4]{HHBook}.
	
	If a monomial ideal is generated in a single degree and it has linear quotients, then it has linear resolution, see \cite[Theorem 8.2.15]{HHBook}.\smallskip
	
	The next example shows that the properties of having linear resolution or linear quotients are not preserved by taking the complementary ideal.
	\begin{Example}\label{ex:Ic}
		\rm Let $I=(x_1x_2x_3,x_1x_2x_5,x_1x_4x_5,x_3x_4x_5)\subset S=K[x_1,x_2,x_3,x_4,x_5]$. Then $I$ has linear quotients, and thus a linear resolution. However, the complementary ideal $I_c(I)=(x_1x_2,x_2x_3,x_3x_4,x_4x_5)$ does not have linear resolution.
	\end{Example}
	
	On the other hand, there is a family of squarefree monomial ideals $I$ whose linear quotients property holds for $I$ if and only if it holds for the complementary ideal.
	
	The \textit{$x_i$-degree} of a monomial $u\in S$ is defined as
	$$
	\deg_{x_i}(u)=\max\{j:\ x_i^j\ \textup{divides}\ u\}.
	$$
	
	A monomial ideal $I\subset S$ is called \textit{polymatroidal} if $I$ is equigenerated and satisfies the \textit{exchange property}: for all $u,v\in\mathcal{G}(I)$ and all $i$ such that $\deg_{x_i}(u)>\deg_{x_i}(v)$, there exists $j$ with $\deg_{x_j}(u)<\deg_{x_j}(v)$ such that $x_j(u/x_i)\in\mathcal{G}(I)$. A squarefree polymatroidal ideal is called \textit{matroidal}. Polymatroidal ideals have linear quotients, see \cite[Theorem 12.6.2]{HHBook}.
	
	By \cite[Lemma 2.1]{HH2003}, a polymatroidal ideal $I$ also satisfy the so-called \textit{dual exchange property}: for all $u,v\in\mathcal{G}(I)$ and all $i$ such that $\deg_{x_i}(u)<\deg_{x_i}(v)$, there exists $j$ with $\deg_{x_j}(u)>\deg_{x_j}(v)$ such that $x_i(u/x_j)\in\mathcal{G}(I)$.
	
	\begin{Theorem}\label{thm:Icpolym}
		Let $I\subset S$ be a squarefree monomial ideal. Then $I$ is matroidal if and only if $I_c(I)$ is matroidal.
	\end{Theorem}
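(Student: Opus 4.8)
The plan is to prove only the implication that $I$ matroidal forces $I_c(I)$ matroidal; the reverse implication then comes for free by applying this to $I_c(I)$ in place of $I$ together with the involution $I_c(I_c(I))=I$ from Proposition \ref{prop:I_c-properties}(b). So I would assume $I$ is matroidal. Being squarefree and matroidal, $I$ is equigenerated, say in degree $d$, and every $u\in\mathcal{G}(I)$ has $\deg_{x_i}(u)\in\{0,1\}$ with $\supp(u)$ of size $d$. By Proposition \ref{prop:I_c-properties}(a), $\mathcal{G}(I_c(I))=\{{\bf x}_{[n]}/u:u\in\mathcal{G}(I)\}$, and each such generator is squarefree of degree $n-d$ with support $[n]\setminus\supp(u)$. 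In particular $I_c(I)$ is again equigenerated and squarefree, so it suffices to verify the exchange property for $I_c(I)$.

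To that end I would fix $w={\bf x}_{[n]}/u$ and $w'={\bf x}_{[n]}/v$ in $\mathcal{G}(I_c(I))$, with $u,v\in\mathcal{G}(I)$, and an index $i$ with $\deg_{x_i}(w)>\deg_{x_i}(w')$. Since everything is squarefree this means $x_i\mid w$ and $x_i\nmid w'$, which translates precisely to $\deg_{x_i}(u)=0<1=\deg_{x_i}(v)$. The key observation is that this is exactly the hypothesis of the \emph{dual} exchange property for $I$ (recorded above from \cite[Lemma 2.1]{HH2003}), not of the ordinary exchange property. Applying the dual exchange property to $u,v$ and $i$ produces an index $j$ with $\deg_{x_j}(u)>\deg_{x_j}(v)$, that is $j\in\supp(u)$ and $j\notin\supp(v)$, such that $u'=x_i(u/x_j)\in\mathcal{G}(I)$.

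It then remains to check that the complement of $u'$ is the generator demanded by the exchange property for $I_c(I)$. A direct computation gives ${\bf x}_{[n]}/u'=x_j(w/x_i)$, which therefore lies in $\mathcal{G}(I_c(I))$, while the conditions $j\in\supp(u)$ and $j\notin\supp(v)$ translate back into $\deg_{x_j}(w)=0<1=\deg_{x_j}(w')$. Thus $j$ witnesses the exchange property for $w,w'$ and $i$, and $I_c(I)$ is matroidal. The main obstacle here is conceptual rather than computational: one must recognize that passing to complements interchanges the roles of the two degree inequalities, so that the exchange property of $I_c(I)$ has to be fed by the \emph{dual} exchange property of $I$; once this is seen, the bookkeeping is routine. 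At a higher level this is just the ring-theoretic shadow of matroid duality, since the supports of $\mathcal{G}(I)$ are the bases of a matroid $M$ and those of $\mathcal{G}(I_c(I))$ are their complements, i.e. the bases of the dual matroid $M^{*}$, so one could alternatively run the whole argument by citing the characterization of squarefree matroidal ideals via bases of a matroid.
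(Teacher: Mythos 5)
Your argument is correct and is essentially identical to the paper's proof: both verify the exchange property for $I_c(I)$ by feeding the hypothesis into the dual exchange property of $I$ (via \cite[Lemma 2.1]{HH2003}), compute ${\bf x}_{[n]}/(x_i(u/x_j))=x_j(w/x_i)$, and dispose of the converse through the involution $I_c(I_c(I))=I$. No substantive differences.
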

	\begin{proof}
		Suppose that $I$ is matroidal, and let $u,v\in\mathcal{G}(I_c(I))$ with $\deg_{x_i}(u)>\deg_{x_i}(v)$. Then $u={\bf x}_{[n]\setminus F}$ and $v={\bf x}_{[n]\setminus G}$ for some ${\bf x}_F,{\bf x}_G\in\mathcal{G}(I)$. Since $I$ is matroidal and $\deg_{x_i}({\bf x}_F)<\deg_{x_i}({\bf x}_G)$, by the dual exchange property there exists an integer $j$ with $\deg_{x_j}({\bf x}_F)>\deg_{x_j}({\bf x}_G)$ such that $x_i({\bf x}_F/x_j)\in\mathcal{G}(I)$. Then $\deg_{x_j}(u)<\deg_{x_j}(v)$ and
		$$
		{\bf x}_{[n]}/(x_i({\bf x}_F/x_j))\ =\ x_j({\bf x}_{[n]\setminus F}/x_i)\ =\ x_j(u/x_i)\in\mathcal{G}(I_c(I)),
		$$
		as desired. The converse follows from Proposition \ref{prop:I_c-properties}(b) and the `only if' part of the statement. 
	\end{proof}
	
	\section{Complementary edge ideals}\label{sec2}
	
	Let $G$ be a finite simple graph on the vertex set $V(G)=[n]=\{1,2,\dots,n\}$, where $n\ge1$ is a positive integer, and with edge set $E(G)$. The \textit{edge ideal} of $G$ is the squarefree monomial ideal of $S=K[x_1,\dots,x_n]$ defined as follows
	$$
	I(G)\ =\ (x_ix_j\ :\ \{i,j\}\in E(G)).
	$$
	
	For a non-empty subset $F\subset[n]$, we put $P_F=(x_i:\ i\in F)$. Let $I=\bigcap_{i=1}^mP_{F_i}$ be the minimal primary decomposition of a squarefree monomial ideal $I\subset S$. The \textit{Alexander dual} of $I$ is defined as the squarefree monomial ideal $$I^\vee\ =\ ({\bf x}_{F_1},\dots,{\bf x}_{F_m}).$$
	
	It is straightforward to see that $(I^\vee)^\vee=I$. The \textit{cover ideal} of $G$, denoted by $J(G)$, is defined as the Alexander dual of $I(G)$.\medskip
	
	In analogy to the classical definitions, we introduce the \textit{complementary edge ideal} of $G$, denoted by $I_c(G)$, which is defined as the complementary ideal of $I(G)$,
	$$
	I_c(G)\ =\ I_c(I(G))\ =\ ({\bf x}_{[n]}/(x_ix_j)\ :\ \{i,j\}\in E(G)).
	$$
	
	Analogously, we define the \textit{complementary cover ideal} of $G$, and denote it by $J_c(G)$, as the Alexander dual of $I_c(G)$,
	$$
	J_c(G)\ =\ I_c(G)^\vee\ =\ \bigcap_{\{i,j\}\in E(G)}P_{[n]\setminus\{i,j\}}.
	$$\smallskip
	
	To state our results we need some terminology from graph theory. Let $A\subset V(G)$ be non empty. The induced subgraph of $G$ on $A$ is defined as the graph $G_A$ with vertex set $A$ and $\{i,j\}\in E(G_A)$ if and only if $\{i,j\}\in E(G)$ and $i,j\in A$.\smallskip
	
	Let $G,H$ be finite simple graphs. We say that $G$ is \textit{$H$-free} if $G$ contains no induced subgraph isomorphic to $H$. Given an integer $t$, we define $tH$ to be the graph consisting of $t$ disjoint copies of $H$.\medskip
	
	The following three graphs will play a crucial role.
	\begin{enumerate}
		\item[(i)] The complete graph on $n$ vertices is the graph $K_n$ with $V(K_n)=[n]$ and edge set $E(K_n)=\{\{i,j\}:\ 1\le i<j\le n\}$.\smallskip
		\item[(ii)] The path on $n$ vertices is the graph $P_n$ with $V(P_n)=[n]$ and edge set $E(P_n)=\{\{i,i+1\}:\ 1\le i<n\}=\{\{1,2\},\{2,3\},\dots,\{n-1,n\}\}$.\smallskip
		\item[(iii)] The cycle on $n$ vertices is the graph $C_n$ with $V(C_n)=[n]$ and edge set $E(C_n)=\{\{1,2\},\{2,3\},\dots,\{n-1,n\},\{n,1\}\}$.\smallskip
	\end{enumerate}
	
	A graph $G$ is called \textit{chordal} if $G$ is $C_s$-free for all $s\ge4$. Whereas, $G$ is called a \textit{forest} if $G$ is $C_s$-free for all $s\ge3$. Hence, any forest is a chordal graph.\smallskip
	
	A $t$-clique of $G$ is a subset $A\subset V(G)$ of size $t$, whose induced subgraph $G_A$ is isomorphic to $K_t$. A 3-clique of $G$ is called a \textit{triangle} of $G$. We denote by $\mathcal{T}(G)$ the set of all triangles of $G$.\smallskip
	
	Recall that the complementary graph of $G$ is the graph $G^c$ with the same vertex set of $G$ and with the edge set $E(G^c)=\{\{i,j\}:\ 1\le i<j\le n\}\setminus E(G)$.\smallskip
	
	As explained in the introduction, it is not restrictive to assume that $G$ does not contain isolated vertices. We maintain this assumption thorough this section.
	
	Firstly, we compute the minimal primary decomposition of $I_c(G)$.
	\begin{Theorem}\label{thm:I_c(G)pd}
		Let $G$ be a finite simple graph without isolated vertices. Then
		$$
		I_c(G)\ =\ (\bigcap_{e\in E(G^c)}P_e)\cap(\bigcap_{T\in\mathcal{T}(G)}P_T).
		$$
	\end{Theorem}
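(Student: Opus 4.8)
The plan is to prove the dual statement and then invoke Alexander duality. By definition $J_c(G)=I_c(G)^\vee=\bigcap_{\{i,j\}\in E(G)}P_{[n]\setminus\{i,j\}}$; since $(\,\cdot\,)^\vee$ is an involution sending a squarefree ideal with minimal generators ${\bf x}_{A_1},\dots,{\bf x}_{A_r}$ to $\bigcap_{i}P_{A_i}$, it suffices to show that the minimal monomial generators of $J_c(G)$ are exactly the $x_ix_j$ with $\{i,j\}\in E(G^c)$ together with the $x_ix_jx_k$ with $\{i,j,k\}\in\mathcal{T}(G)$; equivalently, $J_c(G)=I(G^c)+K_3(G)$. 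A squarefree monomial ${\bf x}_A$ lies in $J_c(G)$ precisely when $A\cap([n]\setminus\{i,j\})\neq\emptyset$ for every edge $\{i,j\}\in E(G)$, i.e.\ when $A\nsubseteq\{i,j\}$ for every edge; so membership amounts to the purely combinatorial condition that $A$ is contained in no edge of $G$, and the task reduces to listing the minimal such $A$.

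I would carry out this listing according to $|A|$. If $A=\{i\}$ is a singleton it fails the condition, because $G$ has no isolated vertex and hence $i$ belongs to some edge; this is the one place where the standing hypothesis is used. If $|A|=2$, then $A=\{i,j\}$ is contained in no edge exactly when $\{i,j\}\notin E(G)$, that is when $\{i,j\}\in E(G^c)$, and any such $A$ is minimal since no singleton qualifies. If $|A|=3$, the condition holds automatically (a three-element set is never contained in a two-element set), so ${\bf x}_A$ is a \emph{minimal} generator if and only if none of the three $2$-subsets of $A$ already qualifies, i.e.\ if and only if every pair inside $A$ is an edge of $G$ --- which is to say $A\in\mathcal{T}(G)$. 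Finally, if $|A|\geq4$ then $A$ contains a $3$-subset, which itself satisfies the condition, so ${\bf x}_A$ is a proper multiple of a generator and is not minimal. This identifies $\mathcal{G}(J_c(G))$ as claimed.

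To conclude, I would apply the duality once more: from $\mathcal{G}(J_c(G))=\{{\bf x}_e:e\in E(G^c)\}\cup\{{\bf x}_T:T\in\mathcal{T}(G)\}$ and $(I_c(G)^\vee)^\vee=I_c(G)$ we obtain $I_c(G)=\bigcap_{e\in E(G^c)}P_e\cap\bigcap_{T\in\mathcal{T}(G)}P_T$. That this decomposition is \emph{minimal} is automatic, since the supports of a minimal squarefree generating set form an antichain under inclusion, whence the primes $P_e$, $P_T$ are pairwise incomparable and no component is redundant. The computation is elementary throughout; the only non-routine point, and the conceptual heart of the statement, is the degree-three case, where minimality forces all three pairs of $A$ to be edges and thereby makes the triangles $\mathcal{T}(G)$ --- rather than arbitrary $3$-sets --- the second family of components.
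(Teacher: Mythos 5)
Your argument is correct, but it runs in the opposite direction from the paper's. The paper works on the primal side: it analyzes $\Ass(I_c(G))$ directly, first showing $\height I_c(G)\ge 2$ (this is where the no-isolated-vertices hypothesis enters), then identifying the height-two associated primes as the $P_e$ with $e\in E(G^c)$, and finally using radicality of $I_c(G)$ to show that any associated prime $P_F$ with $|F|\ge 3$ must satisfy $F\in\mathcal{T}(G)$. You instead work on the dual side: starting from $J_c(G)=\bigcap_{\{i,j\}\in E(G)}P_{[n]\setminus\{i,j\}}$, you observe that ${\bf x}_A\in J_c(G)$ iff $A$ is contained in no edge of $G$, enumerate the minimal such $A$ by cardinality (here the no-isolated-vertices hypothesis kills the singletons), and dualize back using $(I^\vee)^\vee=I$ together with the antichain property of minimal generators to get irredundancy of the resulting decomposition. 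Each step checks out: the degree-$3$ minimality analysis correctly isolates the triangles, and the $|A|\ge 4$ case is disposed of properly. What your route buys is that it establishes $J_c(G)=I(G^c)+K_3(G)$ (the paper's Corollary \ref{cor:J_c(G)dual}) as the primary computation and obtains the theorem as its formal dual, turning the whole problem into listing minimal transversals; what the paper's route buys is a self-contained description of $\Ass(I_c(G))$ without invoking the involutivity of Alexander duality. Both are complete proofs of the stated equality and of its minimality as a primary decomposition.
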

	\begin{proof}
		Let $P=P_F\in\Ass(I_c(G))$. We claim that $|F|\ge2$. Indeed, if $|F|=1$, say $F=\{i\}$, then $P_{\{i\}}$ is not an associated prime of $I_c(G)$ because ${\bf x}_{[n]\setminus e}\in I_c(G)\setminus P_{\{i\}}$ for any edge $e\in E(G)$ with $i\in e$. Hence $|F|\ge2$ and so $\height\,I_c(G)\ge2$.
		
		Assume now that $|F|=2$. If $F\in E(G)$ then $P_F$ is not an associated prime of $I_c(G)$ because ${\bf x}_{[n]\setminus F}\in I_c(G)\setminus P_{F}$. Hence $F\in E(G^c)$. Conversely, let $e\in E(G^c)$. We claim that $P_e\in\Ass(I_c(G))$. Let $f\in E(G)$. Then $f\ne e$ because $e\notin E(G)$. Hence ${\bf x}_{[n]\setminus f}\in P_e$. This shows that $I_c(G)\subset P_e$. Since $\height\,I_c(G)\ge2$, it follows that $P_e\in\Ass(I_c(G))$.
		
		Finally, suppose that $|F|\ge3$. We claim that $|F|=3$ and $F$ is a triangle in $G$. Let $F=\{i,j,k,\dots\}$. If $e=\{i,j\}\in E(G^c)$ then $I_c(G)\subset P_e\subset P_F$. Since $I_c(G)$ is a radical ideal, it follows that $P_F\notin\Ass(I_c(G))$, a contradiction. Therefore, $T=\{i,j,k\}$ is a triangle in $G$. We claim that $F=T$. Let $f\in E(G)$. We can find an edge of the triangle $T$, say $\{i,j\}$, which is different from $f$. This then shows that ${\bf x}_{[n]\setminus f}\in P_T$. Hence $I_c(G)\subset P_T\subset P_F$. Since $I_c(G)$ is radical, it follows that $F=T$. Conversely, let $T=\{i,j,k\}\in\mathcal{T}(G)$. We claim that $P_T\in\Ass(I_c(G))$. The previous argument shows that $I_c(G)\subset P_T$. If $P_T$ was not an associated prime of $I_c(G)$, since $\height\,I_c(G)\ge2$, then at least one of the three prime ideals $P_{\{i,j\}}$, $P_{\{i,k\}}$, $P_{\{j,k\}}$ should be an associated prime of $I_c(G)$. Say, for instance, $P_{\{i,j\}}$. However, this is not possible because ${\bf x}_{[n]\setminus\{i,j\}}\in I_c(G)\setminus P_{\{i,j\}}$. Hence $P_T\in\Ass(I_c(G))$.
	\end{proof}
	
	As an immediate consequence of this result we have the following corollaries. Following \cite{Mor} (see also \cite{KM}), we define the \textit{$t$-clique ideal} of $G$ as the ideal
	$$
	K_t(G)\ =\ ({\bf x}_F\ :\ F\subset[n],\ |F|=t,\ F\ \textup{is a}\ t\textup{-clique of}\ G).
	$$
	
	\begin{Corollary}\label{cor:J_c(G)dual}
		Let $G$ be a finite simple graph without isolated vertices. Then $$J_c(G)\ =\ I(G^c)+K_3(G).$$
	\end{Corollary}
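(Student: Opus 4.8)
The plan is to read off $J_c(G)$ directly from the minimal primary decomposition of $I_c(G)$ supplied by Theorem \ref{thm:I_c(G)pd}, using the definition of the Alexander dual. Recall that if $I=\bigcap_{i=1}^m P_{F_i}$ is the minimal primary decomposition of a squarefree monomial ideal, then $I^\vee=({\bf x}_{F_1},\dots,{\bf x}_{F_m})$. Since $J_c(G)=I_c(G)^\vee$, it suffices to collect the face sets $F$ occurring in the decomposition of $I_c(G)$ and form the associated monomials ${\bf x}_F$.

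Concretely, Theorem \ref{thm:I_c(G)pd} shows that the minimal primes of $I_c(G)$ are exactly the $P_e$ with $e\in E(G^c)$ (the two-element sets) together with the $P_T$ with $T\in\mathcal{T}(G)$ (the three-element sets), and its proof establishes that this is precisely the set $\Ass(I_c(G))$, so the displayed decomposition is genuinely minimal. Forming the Alexander dual therefore produces generators in two families: for each $e=\{i,j\}\in E(G^c)$ the monomial ${\bf x}_e=x_ix_j$, and for each triangle $T\in\mathcal{T}(G)$ the monomial ${\bf x}_T$. The first family is, by definition, the minimal generating set of the edge ideal $I(G^c)$, while the second is, by the definition of the $3$-clique ideal, the generating set of $K_3(G)$. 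Hence $J_c(G)=({\bf x}_e:e\in E(G^c))+({\bf x}_T:T\in\mathcal{T}(G))=I(G^c)+K_3(G)$, as claimed.

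The only point worth flagging is that the definition of the Alexander dual is tied to the \emph{minimal} primary decomposition, so one should confirm that no generator is redundant and that the two families are recognised as the standard generating sets of $I(G^c)$ and $K_3(G)$; both are immediate from the combinatorial descriptions established above. There is no genuine obstacle here — the statement is an essentially formal translation of Theorem \ref{thm:I_c(G)pd} through Alexander duality, which is exactly why it is recorded as a corollary rather than proved from scratch.
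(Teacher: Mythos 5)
Your proof is correct and follows exactly the route the paper intends: the corollary is stated as an immediate consequence of Theorem \ref{thm:I_c(G)pd}, obtained by applying the definition of the Alexander dual to the minimal primary decomposition given there. Your added care about minimality of the decomposition is appropriate but, as you note, already settled by the proof of that theorem.
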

	
	\begin{Corollary}\label{cor:Ic(G)dim}
		Let $G$ be a finite simple graph without isolated vertices. Then
		$$
		\dim\,S/I_c(G)\ =\ \begin{cases}
			\,n-3&\text{if}\ G\ \text{is the complete graph},\\
			\,n-2&\text{otherwise}.
		\end{cases}
		$$
	\end{Corollary}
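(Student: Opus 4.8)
The plan is to read the dimension straight off the minimal primary decomposition furnished by Theorem~\ref{thm:I_c(G)pd}. Since $I_c(G)$ is a squarefree (hence radical) monomial ideal, its minimal primes are exactly the components appearing in that irredundant intersection, and for a monomial prime $P_F$ one has $\dim\,S/P_F=n-|F|$. Hence I would first record the standard identity
\[
\dim\,S/I_c(G)\ =\ \max_{P_F\in\Min(I_c(G))}\dim\,S/P_F\ =\ n-\min_{P_F\in\Min(I_c(G))}|F|,
\]
which reduces the problem to finding the smallest size of a prime component of $I_c(G)$.

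Next I would feed in Theorem~\ref{thm:I_c(G)pd}: the components are the ideals $P_e$ with $e\in E(G^c)$, all of height $2$, together with the ideals $P_T$ with $T\in\mathcal{T}(G)$, all of height $3$. The computation then splits into two cases according to whether the family indexed by $E(G^c)$ is empty. If $G$ is not complete, then $E(G^c)\neq\emptyset$, so a height-$2$ component is present; as every associated prime has size at least $2$ (already shown in the proof of Theorem~\ref{thm:I_c(G)pd}), the minimum is exactly $2$ and therefore $\dim\,S/I_c(G)=n-2$. If $G=K_n$, then $E(G^c)=\emptyset$, so only the triangle components survive, each of height $3$; thus (for $n\ge3$) the minimum is $3$ and $\dim\,S/I_c(G)=n-3$.

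I do not anticipate any genuine obstacle here, as the entire statement is a direct corollary of the primary decomposition together with the elementary relation $\dim\,S/P_F=n-|F|$. The single point deserving a word of care is the degenerate graph $G=K_2$: there $I(G)=(x_1x_2)$ forces $I_c(G)=S$, so $S/I_c(G)=0$; this is consistent with the displayed formula under the convention $\dim 0=-1=n-3$, and I would simply flag it in passing so the complete-graph branch is unambiguous for all $n\ge2$.
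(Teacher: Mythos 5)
Your proof is correct and is exactly the argument the paper intends: the corollary is stated as an immediate consequence of Theorem~\ref{thm:I_c(G)pd}, and you read the dimension off the minimal primary decomposition via $\dim S/P_F = n-|F|$, splitting on whether $E(G^c)$ is empty. Your aside on the degenerate case $G=K_2$ (where $I_c(G)=S$) is a reasonable extra precaution that the paper leaves implicit.
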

	
	\begin{Corollary}\label{cor:Ic(G)unmixed}
		Suppose that $G$ does not have isolated vertices and is not a complete graph. Then, the following conditions are equivalent.
		\begin{enumerate}
			\item[\textup{(a)}] $I_c(G)$ is unmixed.
			\item[\textup{(b)}] $I_c(G)^\vee=I(G^c)$.
			\item[\textup{(c)}] $G$ is $K_3$-free.
		\end{enumerate}
	\end{Corollary}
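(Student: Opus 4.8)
The plan is to deduce all three equivalences directly from the two preceding results, Theorem \ref{thm:I_c(G)pd} and Corollary \ref{cor:J_c(G)dual}, by observing that each condition is governed by a single combinatorial dichotomy: whether $\mathcal{T}(G)$ is empty, that is, whether $G$ is $K_3$-free. Since a triangle of $G$ is by definition a $3$-clique, i.e.\ an induced $K_3$, the condition $\mathcal{T}(G)=\emptyset$ is exactly condition (c).

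First I would establish (a) $\Leftrightarrow$ (c). By Theorem \ref{thm:I_c(G)pd} the associated primes of $I_c(G)$ are precisely the primes $P_e$ with $e\in E(G^c)$, of height $2$, together with the primes $P_T$ with $T\in\mathcal{T}(G)$, of height $3$. Here I would invoke the hypothesis that $G$ is not complete: this guarantees a non-edge, hence $E(G^c)\neq\emptyset$, so $I_c(G)$ always possesses an associated prime of height $2$. Consequently $I_c(G)$ is unmixed if and only if it has no associated prime of height $3$, i.e.\ if and only if $\mathcal{T}(G)=\emptyset$, which is (c).

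Next I would establish (b) $\Leftrightarrow$ (c). By Corollary \ref{cor:J_c(G)dual} we have $I_c(G)^\vee=I(G^c)+K_3(G)$, so condition (b) is equivalent to $K_3(G)\subseteq I(G^c)$. The key observation, and the only genuinely graph-theoretic point in the whole argument, is that no minimal generator of $K_3(G)$ can lie in $I(G^c)$: a generator ${\bf x}_T$ corresponds to a triangle $T=\{i,j,k\}$, all of whose pairs are edges of $G$ and therefore non-edges of $G^c$, so ${\bf x}_T$ is divisible by no generator $x_ax_b$ of $I(G^c)$. Hence $K_3(G)\subseteq I(G^c)$ forces $K_3(G)=(0)$, which holds exactly when $\mathcal{T}(G)=\emptyset$, i.e.\ when $G$ is $K_3$-free.

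I expect no serious obstacle, since this is a formal consequence of the structure theorems already in hand; the only point requiring care is the use of the non-completeness hypothesis in the direction (a) $\Leftrightarrow$ (c). This hypothesis is precisely what supplies a height-$2$ associated prime and thereby rules out the degenerate situation of a complete graph, whose primary decomposition consists solely of the height-$3$ triangle primes and which would otherwise be spuriously unmixed.
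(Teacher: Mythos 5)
Your proposal is correct and follows exactly the route the paper intends: the corollary is stated there as an immediate consequence of the minimal primary decomposition in Theorem \ref{thm:I_c(G)pd} (together with Corollary \ref{cor:J_c(G)dual}), and your argument simply makes the implicit details explicit, including the correct identification of where the non-completeness hypothesis is needed.
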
\smallskip
	
	Our next goal is to characterize when $I_c(G)$ is sequentially Cohen-Macaulay.\smallskip
	
	Recall that a graded $S$-module $M$ is called \textit{sequentially Cohen-Macaulay} if there exists a finite filtration of graded $S$-modules
	$$
	M_0\subset M_1 \subset \cdots \subset M_r=M
	$$
	such that each $M_i/M_{i-1}$ is Cohen-Macaulay, and the Krull dimensions of the quotients are increasing: $\dim(M_1/M_0)<\dim(M_2/M_1)<\cdots<\dim(M_r/M_{r-1})$.\smallskip
	
	We say that an ideal $I\subset S$ is (sequentially) Cohen-Macaulay if $S/I$ is (sequentially) Cohen-Macaulay.\smallskip
	
	By \cite[Theorem 8.2.20]{HHBook}, a squarefree monomial ideal $I\subset S$ is sequentially Cohen-Macaulay if and only if $I^\vee$ \textit{componentwise linear}, which means that $I_{\langle j\rangle}$ has linear resolution for all $j\ge0$. Here, for a homogeneous ideal $I\subset S$ and an integer $j\ge0$, $I_{\langle j\rangle}$ denotes the graded ideal of $S$ generated by the homogeneous polynomials of degree $j$ belonging to $I$.\smallskip
	
	Recall that the \textit{squarefree Veronese ideal of degree $d$} in $S$ is the squarefree monomial ideal generated by all squarefree monomials of $S$ having degree $d$,
	$$
	I_{n,d}=(x_{i_1}\cdots x_{i_d}\ :\ 1\le i_1<\dots<i_d\le n).
	$$
	
	Notice that $I_{n,0}=(1)=S$, $I_{n,n}=(x_1x_2\cdots x_n)$ and $I_c(K_n)=I_{n,n-2}$. Furthermore, it is well-known that $I_{n,d}$ has linear quotients.\smallskip
	
	Let $G$ be a finite simple graph. The \textit{open neighbourhood} of $i\in V(G)$ is the set
	$$
	N_G(i)\ =\ \{j\in V(G):\ \{i,j\}\in E(G)\}.
	$$
	
	A \textit{perfect elimination order} of $G$ is an ordering $v_1,\dots,v_n$ of its vertex set $V(G)$ such that $N_{G_i}(v_i)$ induces a complete subgraph of $G_i$, where $G_i$ is the induced subgraph of $G$ on the vertex set $\{v_i,v_{i+1},\dots,v_n\}$. By Dirac \cite{Dirac}, a finite simple graph $G$ is chordal if and only if $G$ admits a perfect elimination order.
	
	\begin{Theorem}\label{thm:Ic(G)sCM}
		Let $G$ be a finite simple graph without isolated vertices. The following conditions are equivalent.
		\begin{enumerate}
			\item[\textup{(a)}] $I_c(G)$ is sequentially Cohen-Macaulay.
			\item[\textup{(b)}] $J_c(G)$ is componentwise linear.
			\item[\textup{(c)}] $J_c(G)$ has linear quotients.
			\item[\textup{(d)}] $G$ is chordal.
		\end{enumerate}
	\end{Theorem}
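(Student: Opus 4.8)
The plan is to establish the cycle $(d)\Rightarrow(c)\Rightarrow(b)\Leftrightarrow(a)$ together with $(b)\Rightarrow(d)$, with $(d)\Rightarrow(c)$ carrying essentially all the work. The equivalence $(a)\Leftrightarrow(b)$ is just Alexander duality: by \cite[Theorem 8.2.20]{HHBook} a squarefree monomial ideal is sequentially Cohen--Macaulay exactly when its Alexander dual is componentwise linear, and $I_c(G)^\vee=J_c(G)$ by definition. For $(c)\Rightarrow(b)$ I would use the standard fact that a monomial ideal with linear quotients with respect to a degree-increasing order is componentwise linear; the order I build for $(d)\Rightarrow(c)$ is degree-increasing, so this fact applies to it. The implication $(b)\Rightarrow(d)$ is short: if $J_c(G)$ is componentwise linear, then its degree-$2$ strand $J_c(G)_{\langle 2\rangle}$ has a linear resolution, and since by Corollary \ref{cor:J_c(G)dual} we have $J_c(G)=I(G^c)+K_3(G)$, whose only degree-$2$ generators are the edges of $G^c$, we get $J_c(G)_{\langle 2\rangle}=I(G^c)$. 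By Fröberg's theorem \cite{Fr} this ideal has a linear resolution if and only if the complement of $G^c$, namely $G$ itself, is chordal.

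It remains to prove $(d)\Rightarrow(c)$. Assume $G$ chordal and fix a perfect elimination order $1<2<\cdots<n$ of $V(G)$, which exists by Dirac's theorem \cite{Dirac}, so that each higher neighborhood $C_i=\{j>i:\{i,j\}\in E(G)\}$ is a clique. I would order $\mathcal{G}(J_c(G))$ by first listing all degree-$2$ generators (the edges of $G^c$) in a linear-quotients order for $I(G^c)$, which exists by \cite{HHZ} since $G$ is chordal, and then listing all degree-$3$ generators (the triangles of $G$) by increasing least vertex, breaking ties lexicographically. This order is degree-increasing, which is what feeds $(b)$ above. At any degree-$2$ step only earlier non-edges are relevant, so that part is exactly \cite{HHZ}; the content is the verification at each triangle step.

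So fix a triangle $T=\{a,b,c\}$ and an earlier generator $w$, and examine $w:{\bf x}_T$. If $w$ is a non-edge meeting $T$ in one vertex, or a triangle meeting $T$ in two vertices, the colon is a single variable and there is nothing to do. The remaining cases produce a colon of degree $2$ or $3$, and I must exhibit an earlier generator whose colon is a single variable dividing it. Two structural facts drive the argument. First, if $w=\{p,q\}$ is a non-edge disjoint from $T$, I may assume (otherwise a non-edge $\{p,t\}$ or $\{q,t\}$ with $t\in T$ already witnesses $x_p$ or $x_q$) that $p,q$ are both adjacent to all of $T$; then, since $pq\notin E(G)$, the clique property of the perfect elimination order forces $\min\{a,b,c,p,q\}\in\{p,q\}$, because if the minimum were a vertex of $T$ its higher neighborhood would contain $\{p,q\}$ and hence force $pq\in E(G)$. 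That minimum, say $p$, lies below $\min T$, so the triangle $\{p,t_1,t_2\}$ with $t_1,t_2\in T$ is an earlier generator with colon $x_p$. Second, for an earlier triangle $T'$ with $\min T'<\min T$ the colon $w:{\bf x}_T$ is divisible by $x_{\min T'}$, and $x_{\min T'}$ is realized either by the earlier non-edge $\{\min T',t\}$ (when $\min T'$ is non-adjacent to some $t\in T$) or by the earlier triangle $\{\min T',t_1,t_2\}$ (otherwise). The only delicate configuration is an earlier triangle $T'=\{a,q,r\}$ with $\min T'=\min T=a$, forcing the lexicographic condition $q<b$: here the colon is $x_qx_r$, and if $q$ is adjacent to all of $T$ then the triangle $\{a,q,b\}$ precedes $T$ lexicographically precisely because $q<b$, contributing the variable $x_q$, while otherwise a non-edge $\{q,t\}$ serves. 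The main obstacle is exactly this organization of the degree-$3$ colons: a naive choice such as plain lexicographic order fails when two cliques share the bottom vertex of $T$, and it is the least-vertex ordering combined with the clique structure of the perfect elimination order that guarantees the required earlier witness in every case. Once all colons are checked, $J_c(G)$ has linear quotients, closing the cycle.
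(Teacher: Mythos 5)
Your proof is correct, and its architecture coincides with the paper's: the same cycle of implications, the same appeal to \cite[Theorem 8.2.20]{HHBook} for (a)$\Leftrightarrow$(b), the same Fr\"oberg argument for (b)$\Rightarrow$(d) (just note the degenerate case $I(G^c)=(0)$ separately, where $G=K_n$ is chordal anyway), and, for (d)$\Rightarrow$(c), essentially the very same linear quotients order --- your ``increasing least vertex, ties broken lexicographically'' on the triangles is precisely the lexicographic order induced by $x_1>\cdots>x_n$ that the paper uses, preceded by all degree-two generators in a linear quotients order for $I(G^c)$. The one genuine difference is how the colon conditions at the triangle steps are verified. The paper peels off the first vertex of the perfect elimination order, writes $J_c(G)=x_1P+I(H^c)+x_1Q+K_3(H)$ with $H=G_{[n]\setminus\{1\}}$, and inducts on $n$, so that every configuration not involving vertex $1$ is absorbed by the inductive hypothesis for $J_c(H)$; you instead run a direct, non-inductive case analysis on how the earlier generator meets $T$, using the clique property of the higher neighbourhood at the minimum of $T\cup\{p,q\}$ (respectively at $\min T'$, $\min T$) to produce the earlier witness. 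This is the same combinatorial mechanism the paper invokes when it observes that $\{r,r',s,s'\}$ or $\{1,r,s,t\}$ is a $4$-clique, but your global treatment makes the argument self-contained and dispenses with the induction, at the cost of having to enumerate the cases $\min T'<\min T$ and $\min T'=\min T$ explicitly; I checked these cases and they are all covered. Either route is complete.
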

	\begin{proof}
		The equivalence (a) $\Leftrightarrow$ (b) follows by \cite[Theorem 8.2.20]{HHBook}. Next, we prove the implications (b) $\Rightarrow$ (d), (d) $\Rightarrow$ (c) and (c) $\Rightarrow$ (b).\smallskip
		
		(b) $\Rightarrow$ (d) If $I(G^c)=(0)$, then $G$ is the complete graph and so it is chordal. Otherwise, if $I(G^c)\ne(0)$ then $J_c(G)_{\langle2\rangle}=I(G^c)$ has linear resolution by the assumption. Fr\"oberg theorem \cite[Theorem 9.2.3]{HHBook} implies that $(G^c)^c=G$ is chordal.\smallskip
		
		(d) $\Rightarrow$ (c) If $I(G^c)=(0)$, then $G$ is the complete graph $K_n$, and consequently $J_c(G)=J_c(K_n)=I_{n,3}$ has linear quotients, as desired.
		
		Suppose now that $I(G^c)\ne(0)$, then $G$ is not the complete graph on $n$ vertices. Since $G$ is chordal, \cite[Theorem 10.2.6]{HHBook} implies that $I(G^c)$ has linear quotients. Up to a relabeling, we can assume that $1,2,\dots,n$ is a perfect elimination order of $G$. Let $H=G_{[n]\setminus\{1\}}$. Then, $H$ is again chordal and
		$$
		J_c(G)\ =\ x_1P+I(H^c)+x_1Q+K_3(H),
		$$
		where $P=(x_j:\ x_1x_j\in I(G^c))$ and $Q=(x_rx_s:\ x_1x_rx_s\in K_3(G))$.\smallskip
		
		By \cite[proof of Corollary 2.5]{FShort}, $I(G^c)=x_1P+I(H^c)$ has linear quotients order $u_1,\dots,u_m$ with respect to the lexicographic order $>_{\lex}$ induced by $x_1>\dots>x_n$, and by \cite[Lemma 2.4]{FShort} we have $I(H^c)\subset P$. Let $v_1,\dots,v_\ell$ be the minimal monomial generators of $K_3(G)=x_1Q+K_3(H)$ ordered with respect to the lexicographic order induced by $x_1>\dots>x_n$. We claim that
		$$
		u_1,\dots,u_m,v_1,\dots,v_\ell
		$$
		is a linear quotients order of $J_c(G)$. By what said above, we only need to show that $(u_1,\dots,u_m,v_1,\dots,v_{i-1}):v_i$ is generated by variables for all $i=1,\dots,\ell$. Notice that $J_c(H)=I(H^c)+K_3(H)$. Since $H$ is chordal on $n-1$ vertices with perfect elimination order $2,\dots,n$, by using induction on $n$, $J_c(H)$ has the desired linear quotients order. We distinguish the two possible cases.\smallskip
		
		\textbf{Case 1.} Suppose $v_i=x_1x_rx_s\in x_1Q$. Let $u_j=x_1x_p\in x_1P$. Then $u_j:v_i=x_p$ is a variable. Now let $u_j=x_px_q\in I(H^c)$. If $u_j:v_i$ is a variable then we are done. Otherwise, $\{p,q\}\cap\{r,s\}=\emptyset$. Since $I(H^c)\subset P$ (see \cite[Lemma 2.4]{FShort}) we have $x_p\in P$ or $x_q\in P$. Say $x_p\in P$. Then $x_1x_p:x_1x_rx_s=x_p$ divides $u_j:v_i=x_px_q$, as desired. Now consider $v_j:v_i$ with $j<i$. Then $v_j=x_1x_{r'}x_{s'}\in x_1Q$ with $x_{r'}x_{s'}>_{\lex}x_rx_s$. If $v_j:v_i$ is a variable, we are done. Otherwise $v_j:v_i=x_{r'}x_{s'}$ with $r'<s'$ and $r'<r<s$. Since $1,2,\dots,n$ is a perfect elimination order of $G$, then $\{r,r',s,s'\}$ is a 4-clique of $G$. Hence $x_1x_{r'}x_{s}=v_h\in x_1Q$ and $v_h>_{\lex}v_i$. Finally, $v_h:v_i=x_{r'}$ divides $v_j:v_i$.\smallskip
		
		\textbf{Case 2.} Suppose $v_i=x_rx_sx_t\in K_3(H)$. Let $u_j=x_1x_p\in x_1P$. Then $x_1$ divides $u_j:v_i$. If one of the three variables $x_r,x_s,x_t$ belongs to $P$, say $x_r$ then $x_1x_r=u_h$ for some $h$ and $u_h:v_i=x_1$ divides $u_j:v_i$. Suppose now that none of the variables $x_r,x_s,x_t$ belong to $P$. Then $\{1,r\},\{1,s\},\{1,t\}\in E(G)$, and $\{1,r,s,t\}$ is a $4$-clique of $G$. Hence $x_1x_rx_s=v_h\in x_1Q$ for some $h<i$, and $v_h:v_i=x_1$ divides $u_j:v_i$. Now, let $u_j=x_px_q\in I(H^c)$. In this case, since by induction $J_c(H)$ has the desired linear quotients order, it follows that $u_j:v_i$ is divided by some variable which is equal to $w:v_i$ for some $w\in\{u_1,\dots,u_m,v_1,\dots,v_{i-1}\}\cap\mathcal{G}(J_c(H))$. Now, let $v_j\in K_3(G)$ with $j<i$. If $v_j\in x_1Q$, then $x_1$ divides $v_j:v_i$. In this case, it is enough to consider any generator $u_h:v_i$ with $u_h\in x_1P$ and apply the previous arguments. If $v_j\in K_3(H)$ we use again that $J_c(H)$ has the desired linear quotients order by induction.\smallskip
		
		(c) $\Rightarrow$ (b) It follows from \cite[Theorem 8.2.15]{HHBook}.
	\end{proof}
	
	Due to some experimental evidence, we ask the following question.
	\begin{Question}\label{que:Jc}
		Suppose that $G$ is a chordal graph. Is it true that $J_c(G)^k$ is componentwise linear, or even has linear quotients, for all $k\ge1$ ?
	\end{Question}
	
	Notice that $J_c(G)=K_2(G^c)+K_3(G)$. For an integer $t\ge2$, and a finite simple graph $G$, one can consider more generally the ideal $J=K_t(G^c)+K_{t+1}(G)$.
	\begin{Question}
		For which graphs $G$ is the ideal $J=K_t(G^c)+K_{t+1}(G)$ componentwise linear, or has linear quotients? When this is the case, is it true that $J^k$ is componentwise linear, or has linear quotients, for all $k\ge1$ ?
	\end{Question}
	
	As a consequence of Theorem \ref{thm:Ic(G)sCM} we have
	\begin{Theorem}\label{thm:Ic(G)CM}
		Let $G$ be a finite simple graph without isolated vertices. The following conditions are equivalent. 
		\begin{enumerate}
			\item[\textup{(a)}] $I_c(G)$ is Cohen-Macaulay.
			\item[\textup{(b)}] $J_c(G)$ has linear resolution.
			\item[\textup{(c)}] $G$ is the complete graph on $[n]$ or a forest.
		\end{enumerate}
	\end{Theorem}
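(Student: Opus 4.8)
The plan is to dispose first of the equivalence (a) $\Leftrightarrow$ (b), which is nothing but the Eagon--Reiner criterion \cite[Theorem 8.1.9]{HHBook}: since $J_c(G)=I_c(G)^\vee$, the quotient $S/I_c(G)$ is Cohen--Macaulay precisely when its Alexander dual $J_c(G)$ has a linear resolution. The real content is therefore the equivalence (b) $\Leftrightarrow$ (c), and the guiding observation is that a Cohen--Macaulay ideal is in particular sequentially Cohen--Macaulay. Hence, if (a) holds, Theorem \ref{thm:Ic(G)sCM} already forces $G$ to be chordal and $J_c(G)$ to be componentwise linear. This reduces the whole problem to detecting, among chordal graphs, exactly when the componentwise linear ideal $J_c(G)$ acquires an honest linear resolution --- which, for an ideal that is already componentwise linear, happens if and only if it is generated in a single degree.

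Then I would carry out the single-degree analysis using Corollary \ref{cor:J_c(G)dual}, namely $J_c(G)=I(G^c)+K_3(G)$. Here $I(G^c)$ contributes generators in degree $2$ (the edges of $G^c$) and $K_3(G)$ contributes generators in degree $3$ (the triangles of $G$). The key check is that these two families are simultaneously minimal: a triangle $\{i,j,k\}\in\mathcal{T}(G)$ has all three of its edges in $E(G)$, hence none in $E(G^c)$, so no quadratic generator ${\bf x}_e$ with $e\in E(G^c)$ can divide ${\bf x}_{\{i,j,k\}}$. Consequently $\mathcal{G}(J_c(G))$ consists exactly of the $\{{\bf x}_e:e\in E(G^c)\}$ together with the $\{{\bf x}_T:T\in\mathcal{T}(G)\}$, and $J_c(G)$ is generated in a single degree if and only if $E(G^c)=\emptyset$ or $\mathcal{T}(G)=\emptyset$. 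The first alternative says $G=K_n$; the second says $G$ is $K_3$-free. Combining the second alternative with chordality, a standard argument on shortest cycles (a shortest cycle is induced, and has length $3$ or $\geq 4$) shows that a graph is both chordal and $K_3$-free exactly when it is a forest. This produces the dichotomy in (c).

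For the converse (c) $\Rightarrow$ (b), I would note that both complete graphs and forests are chordal, so Theorem \ref{thm:Ic(G)sCM} yields that $J_c(G)$ is componentwise linear; moreover $G=K_n$ gives $J_c(G)=K_3(K_n)=I_{n,3}$ (generated in degree $3$), while a forest is $K_3$-free and hence gives $J_c(G)=I(G^c)$ (generated in degree $2$). In either case $J_c(G)$ is equigenerated, so $J_c(G)=J_c(G)_{\langle d\rangle}$ for the relevant $d$, and the componentwise linear property immediately delivers a linear resolution. I do not expect a serious obstacle here: the proof is essentially an assembly of Eagon--Reiner, Theorem \ref{thm:Ic(G)sCM}, and the two combinatorial facts above. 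The points demanding care are the minimality verification that guarantees the degree-$2$ and degree-$3$ generators never collapse (so that mixed degrees genuinely obstruct a linear resolution), the clean reduction ``componentwise linear $+$ equigenerated $\Rightarrow$ linear resolution'' (which is immediate from the definition of $J_c(G)_{\langle d\rangle}$), and the low-vertex degenerate cases such as $G=K_2$, where $G$ is simultaneously complete and a forest, which should be recorded separately.
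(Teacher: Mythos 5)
Your proposal is correct and follows essentially the same route as the paper: Eagon--Reiner for (a) $\Leftrightarrow$ (b), then the reduction of ``linear resolution'' to ``componentwise linear and equigenerated'', combined with Theorem \ref{thm:Ic(G)sCM} (chordality) and Corollary \ref{cor:J_c(G)dual} (the degree-$2$/degree-$3$ dichotomy forcing $G=K_n$ or $G$ $K_3$-free, hence a forest). Your explicit minimality check for the generators of $I(G^c)+K_3(G)$ is a detail the paper leaves implicit, but it is the same argument.
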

	\begin{proof}
		(a) $\Leftrightarrow$ (b) follows by the Eagon-Reiner theorem \cite[Theorem 8.1.9]{HHBook}. Now, (b) holds if and only if $J_c(G)$ is componentwise linear and generated in a single degree. By Theorem \ref{thm:Ic(G)sCM} and Corollary \ref{cor:J_c(G)dual}, it follows that (b) holds if and only if $G$ is chordal and either $J_c(G)=I(G^c)$ or $J_c(G)=K_3(G)$, that is, either $G$ is $K_3$-free, which means that $G$ is a forest, or $G$ is the complete graph, as desired. 
	\end{proof}
	
	It would be of interest to answer the following question.
	\begin{Question}
		When does $I_c(G)$ satisfy Serre's condition $(S_2)$ ?
	\end{Question}
	
	Let $(R,\m,K)$ be either a local ring or a standard graded $K$-algebra, with (graded) maximal ideal $\m$, which is Cohen-Macaulay and admits a canonical module $\omega_R$. The \textit{canonical trace} of $R$ is defined as the ideal
	$$
	\textup{tr}(\omega_R)=\sum_{\varphi\in\textup{Hom}_R(\omega_R,R)}\varphi(\omega_R).
	$$
	
	Following \cite{HHS}, we say that $R$ is \textit{nearly Gorenstein} if $\m\subseteq\textup{tr}(\omega_R)$. Since $R$ is Gorenstein if and only if $\textup{tr}(\omega_R)=R$, Gorenstein rings are nearly Gorenstein.
	
	As before, we say that $I\subset S$ is (nearly) Gorenstein if $S/I$ is a (nearly) Gorenstein ring. We close this section by classifying when $I_c(G)$ is nearly Gorenstein.
	
	\begin{Lemma}\label{lem:IndGor}
		The following statements hold.
		\begin{enumerate}
			\item[\textup{(a)}] $I_{n,d}$ is Gorenstein if and only if $d\in\{0,1,n\}$.
			\item[\textup{(b)}] $I_{n,d}$ is nearly Gorenstein if and only if $d\in\{0,1,2,n\}$.
		\end{enumerate}
	\end{Lemma}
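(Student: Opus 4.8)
The plan is to settle the two trivial boundary ranges by hand and to reduce everything else to one type computation and one trace computation. For part (a), first dispose of $d\in\{0,1,n\}$: here $S/I_{n,d}$ is the zero ring, the field $K=S/\m$, and the hypersurface $S/(x_1\cdots x_n)$, all Gorenstein. For $2\le d\le n-1$ the Stanley--Reisner complex of $I_{n,d}$ is the $(d-2)$-skeleton $\Delta$ of the $(n-1)$-simplex, which is Cohen--Macaulay, and $I_{n,d}$ has a $d$-linear resolution. Hence $S/I_{n,d}$ has a pure resolution, and its Cohen--Macaulay type equals the last Betti number $\beta_{n-d+1}(S/I_{n,d})$, which by purity sits in internal degree $n$. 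By Hochster's formula \cite{HHBook}, taking $W=[n]$, this number is $\dim_K\tilde H_{d-2}(\Delta;K)=\binom{n-1}{d-1}$. Since $\binom{n-1}{d-1}>1$ exactly when $2\le d\le n-1$, the ring is Gorenstein if and only if $d\in\{0,1,n\}$.

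For part (b), the cases $d\in\{0,1,n\}$ are nearly Gorenstein since they are Gorenstein. For the complementary range I would use that $\m\subseteq\textup{tr}(\omega_R)$ forces $\textup{tr}(\omega_R)\not\subseteq P$ for every prime $P\neq\m$, so that nearly Gorenstein implies Gorenstein on the punctured spectrum. When $3\le d\le n-1$, localize $R=S/I_{n,d}$ at the face prime $P=(x_j:j\neq i)$ attached to the vertex $i$; by the standard localization principle for Stanley--Reisner rings, $R_P$ is Gorenstein if and only if $\textup{lk}_\Delta(\{i\})$ is Gorenstein over $K$. This link is the $(d-3)$-skeleton of a simplex on $n-1$ vertices, i.e.\ the squarefree Veronese situation $I_{n-1,d-1}$, and by part (a) it is not Gorenstein because $d-1\notin\{0,1,n-1\}$. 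Thus $R_P$ fails to be Gorenstein for a non-maximal prime, so $R$ is not nearly Gorenstein.

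It remains to show that $d=2$ (with $n\ge3$) is nearly Gorenstein, and this is the crux. Here $R=S/I_{n,2}$ is the coordinate-axes ring, identified with $\{(f_1,\dots,f_n)\in\prod_i K[t_i]:f_i(0)\text{ are all equal}\}$. I would compute $\omega_R$ by Noether normalization along $A=K[\theta]$ with $\theta=x_1+\cdots+x_n$: the ring $R$ is $A$-free on the basis $1,x_1,\dots,x_{n-1}$, so $\omega_R=\textup{Hom}_A(R,A)(-1)$ is minimally generated in degree $0$ by the $n-1$ dual elements $g_1,\dots,g_{n-1}$, which also re-proves $\textup{type}(R)=n-1$. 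The relations $x_kg_i=0$ for $k\neq i<n$ and $x_n(g_i-g_j)=0$ then let me write down explicit degree-one homomorphisms $\omega_R\to R$ realizing each variable: the map $g_i\mapsto x_i$ (and $g_j\mapsto 0$ for $j\neq i$) yields $x_i\in\textup{tr}(\omega_R)$ for $i<n$, and the map $g_i\mapsto x_n$ for all $i$ yields $x_n\in\textup{tr}(\omega_R)$. Hence $\textup{tr}(\omega_R)\supseteq\m$, and since $R$ is not Gorenstein we conclude $\textup{tr}(\omega_R)=\m$, so $R$ is nearly Gorenstein.

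The main obstacle is exactly this last step. The delicate point is verifying that the candidate maps are genuine $R$-module homomorphisms, i.e.\ that they annihilate \emph{every} syzygy among $g_1,\dots,g_{n-1}$ and not merely the obvious degree-one ones. I would secure this either by proving that the displayed relations generate the full first syzygy module, or, more invariantly, by identifying $\omega_R$ with an explicit ideal of $R$ and computing $\textup{tr}(\omega_R)=\omega_R\,(R:_{Q(R)}\omega_R)$ inside the total quotient ring $Q(R)=\prod_i K(t_i)$. Assembling the canonical module and tracking the grading shifts correctly is where the real work lies; part (a) and the punctured-spectrum reduction for $3\le d\le n-1$ are comparatively routine.
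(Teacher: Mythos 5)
Your proposal is correct in outline but takes a genuinely different route from the paper, most notably in part (b). For part (a) the two arguments compute the same number by different means: the paper uses the Cohen--Macaulayness and projective dimension of $I_{n,d}$ from \cite{CF2024} together with \cite[Corollary 7.4.2]{HHBook} to identify the last Betti number with $|\{u\in\mathcal{G}(I_{n,d}):x_n\mid u\}|=\binom{n-1}{d-1}$, while you reach the same binomial coefficient via Hochster's formula applied to the top reduced homology of the $(d-2)$-skeleton; both are fine and essentially equivalent. The divergence is in part (b): the paper settles the entire range $2\le d\le n-1$ in one line by citing the Miyashita--Varbaro classification \cite[Theorem B(Y)(4)]{MV}, whereas you argue from scratch. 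Your exclusion of $3\le d\le n-1$ --- nearly Gorenstein forces Gorenstein on the punctured spectrum (as in \cite{HHS}), localization at a vertex prime passes to the link, and the link of a vertex in the $(d-2)$-skeleton is exactly the $I_{n-1,d-1}$ situation, which fails to be Gorenstein by part (a) --- is correct and is a pleasant inductive use of part (a) that the paper never needs. What your route buys is independence from \cite{MV}; what it costs is that the positive assertion at $d=2$ must actually be proved rather than quoted.

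That $d=2$ step is the one place where your argument, as written, is a plan rather than a proof, and you flag it yourself: you must check that $g_i\mapsto x_i$ (with $g_j\mapsto 0$ for $j\ne i$) and $g_i\mapsto x_n$ (for all $i$) kill \emph{all} syzygies of $g_1,\dots,g_{n-1}$, not just the displayed ones. The gap does close along the first of the two lines you suggest: the quotient of $R^{n-1}$ by the relations $x_kg_i=0$ ($k\ne i$, $k<n$) and $x_n(g_i-g_j)=0$ has Hilbert function at most $(n-1,n,n,\dots)$, which is exactly the Hilbert function of $\omega_R=\textup{Hom}_A(R,A)(-1)\cong A(-1)\oplus A^{\,n-1}$ as an $A=K[\theta]$-module, so the surjection onto $\omega_R$ is an isomorphism and the displayed relations do present $\omega_R$. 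With that one Hilbert-function count supplied, your argument is complete and yields a self-contained proof of the lemma; without it, the $d=2$ case rests on an unverified well-definedness claim that the paper simply outsources to \cite{MV}.
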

	\begin{proof}
		(a) If $d=0$, then $I_{n,0}=(0)$ is Gorenstein.
		
		Let $d>0$. By \cite[Lemma 2 and Corollary 3]{CF2024}, $\pd\,I_{n,d}=n-d$ and $I_{n,d}$ is Cohen-Macaulay. By \cite[Corollary 7.4.2]{HHBook}, the last non-zero Betti number of $I_{n,d}$ is
		$$
		\beta_{n-d}(I_{n,d})\ =\sum_{\substack{u\in\mathcal{G}(I_{n,d})\\ n\in\supp(u)}}\binom{n-d}{n-d}\ =\ |\{u\in\mathcal{G}(I_{n,d})\ :\ x_n\ \textup{divides}\ u\}|,
		$$
		which is equal to 1 if and only if $d=1$ or $d=n$. The assertion follows.\smallskip
		
		(b) By part (a), we can assume that $2\le d\le n-1$. Notice that $I_{n,d}=I_\Delta$ can be regarded as the Stanley-Reisner ideal of the simplicial complex $\Delta$ with set of facets $\mathcal{F}(\Delta)=\{\{i_1,\dots,i_{d-1}\}:\ 1\le i_1<\dots<i_{d-1}\le n\}$. By \cite[Theorem B(Y)(4)]{MV}, it follows that $I_{n,d}$ is nearly Gorenstein, in the given range $2\le d\le n-1$, only when $d=2$ and this completes the proof.
	\end{proof}
	
	\begin{Corollary}\label{cor:Ic(G)Gor}
		Let $G$ be a finite simple graph without isolated vertices. The following conditions are equivalent.
		\begin{enumerate}
			\item[\textup{(a)}] $I_c(G)$ is nearly Gorenstein.
			\item[\textup{(b)}] $G\in\{K_2,\,K_3,\,2K_2,\,K_4,\,P_3,\,P_4\}$.
		\end{enumerate}
	\end{Corollary}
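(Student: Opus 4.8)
The plan is to reduce to the Cohen--Macaulay case and then treat separately the two families allowed by Theorem~\ref{thm:Ic(G)CM}. Since being nearly Gorenstein presupposes that $S/I_c(G)$ is Cohen--Macaulay, condition (a) forces $G$ to be either the complete graph or a forest. Each of $K_2,K_3,2K_2,P_3$ is Gorenstein by Theorem~\ref{thm:IcGor}, hence nearly Gorenstein, so the real content is to identify the \emph{strictly} nearly Gorenstein complementary edge ideals and to show they are exactly $K_4$ and $P_4$. The complete graph is immediate: $I_c(K_n)=I_{n,n-2}$, and Lemma~\ref{lem:IndGor}(b) applies verbatim, so $I_c(K_n)$ is nearly Gorenstein iff $n-2\in\{0,1,2,n\}$, i.e.\ iff $n\in\{2,3,4\}$, giving $K_2,K_3,K_4$, of which only $K_4$ is not Gorenstein by Lemma~\ref{lem:IndGor}(a).

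The forest case is the heart. Here $G$ has no triangles, so $I_c(G)=\bigcap_{e\in E(G^c)}P_e$ is Cohen--Macaulay of height $2$ with exactly $m:=|E(G)|$ minimal generators; by Hilbert--Burch its Cohen--Macaulay type equals $m-1$, so $I_c(G)$ is Gorenstein iff $m\le 2$, recovering the forests $K_2,2K_2,P_3$. To control the non-Gorenstein forests I will invoke the theorem of Herzog--Hibi--Stamate \cite{HHS}: a nearly Gorenstein ring is Gorenstein on its punctured spectrum. Writing $R=S/I_c(G)$, this says $R[x_v^{-1}]$ is Gorenstein for every vertex $v$. A direct computation identifies this localization: putting $W=[n]\setminus\{v\}$ and treating $x_v$ as a unit, one obtains $R[x_v^{-1}]\cong (K[x_W]/\bar I_v)[x_v^{\pm1}]$, where $\bar I_v$ is generated by the monomials ${\bf x}_{W\setminus e}$ for the edges $e\not\ni v$ together with the monomials ${\bf x}_{W\setminus\{u\}}$ for the neighbours $u$ of $v$, the latter being redundant unless $u$ is a leaf. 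Bookkeeping yields the clean formula $\mu(\bar I_v)=\big(m-\deg_G(v)\big)+\ell(v)$, where $\ell(v)$ counts the leaf-neighbours of $v$, and $\bar I_v$ is again height-$2$ Cohen--Macaulay (it is, up to isolated-vertex corrections, $I_c(G-v)$). Since a Laurent extension neither creates nor destroys the Gorenstein property, $R[x_v^{-1}]$ is Gorenstein iff $\bar I_v$ is a complete intersection, i.e.\ iff $\mu(\bar I_v)\le 2$.

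Now I exploit that every forest with an edge has a leaf. Evaluating the necessary condition $\mu(\bar I_v)\le 2$ at a leaf $v$ gives $m-1+\ell(v)\le 2$; since $\ell(v)\in\{0,1\}$ this forces $m\le 3$ and rules out any $K_2$ component (which would give $\ell(v)=1$ and hence $m\le 2$). Combined with $m\ge 3$ (non-Gorenstein), we get $m=3$. The forests with three edges and no $K_2$ component are exactly $P_4$ and the star $K_{1,3}$; the star fails at its center $v$, where $\deg_G(v)=3$ and all three neighbours are leaves give $\mu(\bar I_v)=3$, so only $P_4$ survives. It then remains to verify the converse, that $P_4$ (and $K_4$) really are nearly Gorenstein: for $K_4$ this is Lemma~\ref{lem:IndGor}(b), and for $I_c(P_4)=(x_3x_4,x_1x_4,x_1x_2)$ one computes the Hilbert--Burch presentation and the canonical trace explicitly and checks $\m\subseteq\textup{tr}(\omega_R)$.

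The main obstacle is the forest case, and within it the passage from the necessary condition supplied by \cite{HHS} to a genuine classification: one must pin down the localizations $R[x_v^{-1}]$ exactly---in particular handle the extra, possibly non-redundant generators coming from leaf-neighbours of $v$---in order to turn ``Gorenstein on the punctured spectrum'' into the sharp inequality $\mu(\bar I_v)\le 2$. Once this is in place the leaf argument closes the classification at once, and the only remaining (routine but necessary) point is the explicit verification that $P_4$ achieves the nearly Gorenstein bound, so that the criterion of \cite{HHS} is not merely necessary but sufficient here.
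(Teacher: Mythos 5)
Your proposal is correct, and the reduction to Theorem~\ref{thm:Ic(G)CM} plus the treatment of the complete-graph case via Lemma~\ref{lem:IndGor} coincide with the paper's argument. Where you genuinely diverge is the forest case: the paper disposes of it in one stroke by quoting the classification of nearly Gorenstein squarefree monomial ideals of height $2$ from \cite[Theorem 4.1]{F25} (see also \cite[Corollary 3.6]{FHST}), whose three families immediately yield $2K_2$, $P_3$ and $P_4$ (the triangle case being excluded by a support count). You instead extract only the \emph{necessary} condition from \cite{HHS} that a nearly Gorenstein ring is Gorenstein on the punctured spectrum, translate it into the combinatorial inequality $\mu(\bar I_v)=(m-\deg_G(v))+\ell(v)\le 2$ for every vertex $v$ via an explicit description of $R[x_v^{-1}]$, and then let a leaf do the work; your identification of the non-redundant generators of $\bar I_v$ (only leaf-neighbours of $v$ contribute degree-$(n-2)$ generators) and the Hilbert--Burch type count are both correct, and the argument does pin down $\{K_2,2K_2,P_3,P_4\}$ among forests, correctly eliminating $K_{1,3}$ at its centre. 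What your route buys is independence from the height-$2$ classification theorem, at the cost of having to verify \emph{sufficiency} for $P_4$ by a direct computation of $\textup{tr}(\omega_R)$ from the Hilbert--Burch presentation of $(x_1x_2,x_1x_4,x_3x_4)$ --- a short but unavoidable calculation that you correctly flag but do not carry out, and which the paper's citation supplies for free. Do make that computation explicit (or cite \cite{F25} for it) before considering the proof complete.
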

	\begin{proof}
		Let $G$ be nearly Gorenstein. Then by Theorem \ref{thm:Ic(G)CM}, $G$ is either a complete graph or a forest.
		
		If $G=K_n$ is the complete graph, then $I_c(G)=I_{n,n-2}$ and using Lemma \ref{lem:IndGor}, we have that $I_c(G)$ is nearly Gorenstein if and only if $n-2\in\{0,1,2,n\}$. Hence, if and only if, $n\in\{2,3,4\}$, that is $G\in\{K_2,K_3,K_4\}$.
		
		If $G$ is a forest, by Corollary \ref{cor:Ic(G)dim}, $I_c(G)$ is a Cohen-Macaulay ideal of height 2. Now, we apply \cite[Theorem 4.1]{F25}, (see also \cite[Corollary 3.6]{FHST}). According to this result, the only nearly Gorenstein squarefree monomial ideals in $S$ of height 2 are
		\begin{enumerate}
			\item[(i)] $I=(u,v)$, where $u,v$ are squarefree monomials with $\supp(u)\cap\supp(v)=\emptyset$,
			\item[(ii)] $I=(x_1x_2,x_1x_3,x_2x_3)$ with $n=3$,
			\item[(iii)] $I=(x_1x_3,x_1x_4,x_2x_4)$ with $n=4$.
		\end{enumerate}
		In case (i), since $G$ does not have isolated vertices, it follows that either $G=2K_2$ (in which case $I_c(2K_2)=(x_1x_2,x_3x_4)$) or $G=P_3$ (in which case $I_c(P_3)=(x_1,x_3)$). The case (ii) does not occur, because $\supp(I)=[3]$ but $I_c(G)$ should be generated in degree $|\supp(I)|-2=1$. Finally, in case (iii) we have $G=P_4$.
	\end{proof}
	
	As a consequence, we have:
	
	\begin{Theorem}\label{thm:IcGor}
		Let $G$ be a finite simple graph without isolated vertices. The following conditions are equivalent.
		\begin{enumerate}
			\item[\textup{(a)}] $I_c(G)$ is Gorenstein.
			\item[\textup{(b)}] $G\in\{K_2,\,K_3,\,2K_2,\,P_3\}$.
		\end{enumerate}
	\end{Theorem}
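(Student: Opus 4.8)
The plan is to leverage the fact that every Gorenstein ring is nearly Gorenstein, so that Corollary~\ref{cor:Ic(G)Gor} does almost all of the work for us. Since $I_c(G)$ Gorenstein implies $I_c(G)$ nearly Gorenstein, that corollary immediately restricts the possible graphs to $G\in\{K_2,\,K_3,\,2K_2,\,K_4,\,P_3,\,P_4\}$. Thus it only remains to decide, for each of these six explicit graphs, whether $I_c(G)$ is genuinely Gorenstein, and to show that exactly $K_2,K_3,2K_2,P_3$ survive while $K_4,P_4$ are ruled out. The converse direction of the theorem is then automatic once we have verified Gorensteinness for the four surviving graphs.

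First I would dispose of the complete graphs. If $G=K_n$ then $I_c(G)=I_{n,n-2}$, so Lemma~\ref{lem:IndGor}(a) applies directly: $I_{n,n-2}$ is Gorenstein if and only if $n-2\in\{0,1,n\}$, i.e.\ if and only if $n\in\{2,3\}$ (the equation $n-2=n$ being impossible). Hence $I_c(K_2)=I_{2,0}$ and $I_c(K_3)=I_{3,1}$ are Gorenstein, whereas $I_c(K_4)=I_{4,2}$ is not, which eliminates $K_4$ from the list.

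For the remaining forests $2K_2,P_3,P_4$ I would use that, by Theorem~\ref{thm:Ic(G)CM} together with Corollary~\ref{cor:Ic(G)dim}, $I_c(G)$ is in each case a Cohen--Macaulay ideal of height $2$. For a perfect (grade-$2$, Cohen--Macaulay) ideal, the Hilbert--Burch resolution shows that the Cohen--Macaulay type equals $|\mathcal{G}(I_c(G))|-1$, so $I_c(G)$ is Gorenstein precisely when it is a complete intersection, i.e.\ precisely when it has exactly two minimal generators. Now $I_c(2K_2)=(x_1x_2,\,x_3x_4)$ and $I_c(P_3)=(x_1,\,x_3)$ each have two generators (indeed they are complete intersections on a regular sequence), hence are Gorenstein; while $I_c(P_4)=(x_1x_2,\,x_1x_4,\,x_3x_4)$ has three minimal generators, so it has type $2$ and is not Gorenstein. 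This eliminates $P_4$ and confirms $2K_2,P_3$, completing the list $\{K_2,K_3,2K_2,P_3\}$.

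I do not expect a serious obstacle here, since the delicate classification has already been carried out in Corollary~\ref{cor:Ic(G)Gor}; the only point requiring a little care is the justification that a height-$2$ Cohen--Macaulay monomial ideal is Gorenstein if and only if it is a complete intersection, which I would either cite as the classical structure theorem for grade-$2$ Gorenstein ideals or deduce from the generator count via Hilbert--Burch. Counting minimal generators of the four concrete ideals $I_c(K_4)$, $I_c(2K_2)$, $I_c(P_3)$, $I_c(P_4)$ is then entirely routine.
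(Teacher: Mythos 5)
Your proposal is correct and follows essentially the same route as the paper: reduce to the six nearly Gorenstein candidates via Corollary~\ref{cor:Ic(G)Gor}, handle the complete graphs with Lemma~\ref{lem:IndGor}(a), and rule out $P_4$ by noting that a height-$2$ Cohen--Macaulay ideal has type $|\mathcal{G}(I)|-1$. The only cosmetic difference is that you invoke Hilbert--Burch uniformly for all three forests, whereas the paper simply observes that $I_c(2K_2)$ and $I_c(P_3)$ are complete intersections.
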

	\begin{proof}
		$I_c(G)$ is nearly Gorenstein if and only if $G\in\{K_2,K_3,2K_2,K_4,P_3,P_4\}$. The ideals $I_c(K_2)=I_{2,0}=(0)$, $I_c(K_3)=I_{3,1}=(x_1,x_2,x_3)$, $I_c(2K_2)=(x_1x_2,x_3x_4)$ and $I_c(P_3)=(x_1,x_3)$ are clearly Gorenstein. On the other hand, $I_c(K_4)=I_{4,2}$ is not Gorenstein by Lemma \ref{lem:IndGor}(a), and the ideal $I_c(P_4)=(x_1x_3,x_1x_4,x_2x_4)$ is not Gorenstein for it has Cohen-Macaulay type equal to $|\mathcal{G}(I_c(P_4))|-1=2$.
	\end{proof}
	
	\section{Polymatroidal property of $I_c(G)$}\label{sec3}
	
	In this short section we recover and strengthen a result due to, independently, Blum \cite[Corollary 4.3]{B} Ohsugi and Hibi \cite[Theorem 1.1 and Corollary 1.3]{OH}, and Nasernejad, Khashyarmanesh and Qureshi \cite[Theorem 2.3]{KNQ}.
	
	Recall that $G$ is called a \textit{complete multipartite graph} if there exists a partition of the vertex set $V(G)=A_1\sqcup\cdots\sqcup A_m$, with $m\ge2$ and $A_i\ne\emptyset$ for $i=1,\dots,m$, such that $E(G)=\{\{r,s\}:\ r\in A_i,\,s\in A_j, i\neq j\}$.  
	
	\begin{Theorem}\label{thm:Ic(G)polym}
		Let $G$ be a finite simple graph without isolated vertices. The following conditions are equivalent.
		\begin{enumerate}
			\item[\textup{(a)}] $I(G)$ is matroidal.
			\item[\textup{(b)}] $I_c(G)$ is matroidal.
			\item[\textup{(c)}] $G^c$ is a $P_3$-free graph.
			\item[\textup{(d)}] $G^c$ is a disjoint union of cliques.
			\item[\textup{(e)}] $G$ is a complete multipartite graph.
		\end{enumerate}
	\end{Theorem}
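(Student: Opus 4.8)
The plan is to close the cycle of implications by separating the purely combinatorial equivalences from the algebraic ones, and to lean on the involution already in hand. First I would dispose of (a) $\Leftrightarrow$ (b) in one line: since $I_c(G)=I_c(I(G))$ and $I(G)$ is a squarefree monomial ideal, Theorem \ref{thm:Icpolym} gives that $I(G)$ is matroidal if and only if $I_c(G)$ is matroidal. For the three combinatorial conditions I would argue (c) $\Leftrightarrow$ (d) $\Leftrightarrow$ (e) directly. A finite simple graph is $P_3$-free exactly when each connected component is a clique: if some component were not complete it would contain two non-adjacent vertices, and the first three vertices of a shortest path joining them form an induced $P_3$; conversely, in a disjoint union of cliques any two neighbours of a common vertex lie in the same clique and are therefore adjacent, so no induced $P_3$ can occur. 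Passing to complements then turns a partition of $V(G^c)$ into cliques $A_1,\dots,A_m$ into a complete multipartite partition of $V(G)$ with the same parts, and conversely; the assumption that $G$ has no isolated vertex forces $m\ge 2$, which matches the definition of complete multipartite. This yields (c) $\Leftrightarrow$ (d) $\Leftrightarrow$ (e), and it remains only to connect this block to (a).

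For (e) $\Rightarrow$ (a), I would verify the exchange property directly, using that $I(G)$ is automatically equigenerated in degree $2$, so matroidality is equivalent to the exchange property. Write $G$ as complete multipartite with parts $A_1,\dots,A_m$, and take two generators $u=x_ax_b$ and $v=x_cx_d$ together with an index at which $u$ has the larger exponent, say $a\notin\{c,d\}$. Then $\{c,d\}\setminus\{b\}$ is non-empty, and since $c,d$ lie in two distinct parts, at most one of them shares the part of $b$; hence some $j\in\{c,d\}\setminus\{b\}$ lies in a part different from that of $b$, so $\{b,j\}\in E(G)$ and $x_j(u/x_a)=x_jx_b\in\mathcal{G}(I(G))$, as required.

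For the reverse I would prove the contrapositive $\neg$(c) $\Rightarrow$ $\neg$(a). Suppose $G^c$ contains an induced $P_3$ with centre $b$ and endpoints $a,c$; translated to $G$ this reads $\{a,c\}\in E(G)$ while $\{a,b\},\{b,c\}\notin E(G)$, with $a,b,c$ distinct. As $b$ is not isolated, fix a neighbour $e$ of $b$; then $e\notin\{a,b,c\}$, since an equality $e=a$ or $e=c$ would contradict a non-edge. Now compare $u=x_bx_e$ and $v=x_ax_c$ at the index $e$: here $\deg_{x_e}(u)=1>0=\deg_{x_e}(v)$, so the exchange property would require some $j\in\{a,c\}$ with $x_jx_b=x_j(u/x_e)\in\mathcal{G}(I(G))$, that is $\{j,b\}\in E(G)$; but both $\{a,b\}$ and $\{c,b\}$ are non-edges, so no such $j$ exists and $I(G)$ is not matroidal. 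The hard part will be precisely this last step: because the exchange property only asks for the \emph{existence} of a swap, testing it at the obvious index (the centre $b$) leaves too much room and produces no contradiction. The key insight is to test it instead at a neighbour $e$ of $b$, where the two candidate swaps $x_ax_b$ and $x_cx_b$ are exactly the two non-edges of the $P_3$, so the failure is forced. Combining $(a)\Leftrightarrow(b)$, $(c)\Leftrightarrow(d)\Leftrightarrow(e)$, and $(e)\Rightarrow(a)\Rightarrow(c)$ then gives the equivalence of all five conditions.
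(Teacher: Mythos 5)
Your proof is correct and follows essentially the same strategy as the paper: (a) $\Leftrightarrow$ (b) via Theorem \ref{thm:Icpolym}, the graph-theoretic equivalences (c) $\Leftrightarrow$ (d) $\Leftrightarrow$ (e) (which the paper leaves as an exercise), and a direct verification/refutation of the exchange property driven by an induced $P_3$ in $G^c$ together with an edge of $G$ at its centre vertex. The only cosmetic difference is that you test the exchange property on the degree-$2$ generators of $I(G)$, whereas the paper runs the mirror-image computation on the degree-$(n-2)$ generators of $I_c(G)$; the two are interchangeable via the involution.
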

	\begin{proof}
		The equivalence between (a) and (b) follows from Theorem \ref{thm:Icpolym}.\smallskip
		
		(b) $\Rightarrow$ (c) Assume that condition (b) holds, and suppose by contradiction that (c) does not hold. Then $G^c$ contains an induced path of length three, say with the vertex set $\{i,p,q\}$ and the edges $\{p,i\},\{i,q\}\in E(G^c)$. Then $e=\{p,q\}\in E(G)$ and $i$ is not adjacent to $p$ and $q$ in $G$. Since $G$ does not contain isolated vertices, we can find an edge $f=\{i,r\}\in E(G)$ with $r\ne p,q$. Now, let $u={\bf x}_{[n]\setminus e}$ and $v={\bf x}_{[n]\setminus f}$. We have $\deg_{x_i}(u)>\deg_{x_i}(v)$. Therefore, by the exchange property there is $j$ with $\deg_{x_j}(u)<\deg_{x_j}(v)$ such that $x_j(u/x_i)\in\mathcal{G}(I_c(G))$. This is impossible, because $\{i,p\},\{i,q\}\notin E(G)$. The assertion follows.\smallskip
		
		(c) $\Rightarrow$ (b) We prove that $I_c(G)$ satisfies the exchange property. To this end, let $u,v\in\mathcal{G}(I_c(G))$ with $\deg_{x_i}(u)>\deg_{x_i}(v)$. Then $u={\bf x}_{[n]\setminus e}$ and $v={\bf x}_{[n]\setminus f}$ for some edges $e,f\in E(G)$, with $i\in f$ and $i\notin e$. Let $e=\{p,q\}$. Then $p,q\ne i$. Notice that if $\deg_{x_j}(u)<\deg_{x_j}(v)$ for some integer $j$, then $j\in\{p,q\}$. For the exchange property to hold, we should have either $x_q(u/x_i)={\bf x}_{[n]\setminus\{i,p\}}\in\mathcal{G}(I_c(G))$ or $x_p(u/x_i)={\bf x}_{[n]\setminus\{i,q\}}\in\mathcal{G}(I_c(G))$. That is, either $\{i,p\}\in E(G)$ or $\{i,q\}\in E(G)$. Suppose this was not the case. Then $\{p,i\},\{i,q\}\in E(G^c)$ and $\{p,q\}\notin E(G^c)$. Hence $G^c$ would contain an induced path of length three, a contradiction. This proves our assertion.\smallskip
		
		Finally, it is a simple exercise in graph theory to show that (c) $\Leftrightarrow$ (d) $\Leftrightarrow$ (e).
	\end{proof}
	
	\section{Depth and regularity functions of $I_c(G)$}\label{sec4}
	
	For a finite simple graph $G$ we denote by $c(G)$ the number of connected components of $G$ which are not isolated vertices. 
	\begin{Theorem}\label{thm:Ic(G)regdepth}
		Let $G$ be a finite simple graph on the vertex set $[n]$. Then        \smallskip
		
		$$
		\reg\,I_c(G)^k\ =\ \begin{cases}
			(|V(G)|-1)k&\textit{if}\ \ 1\le k\le c(G)-2,\\
			(|V(G)|-2)k+c(G)-1&\textit{if}\ \ k\ge c(G)-1,
		\end{cases}
		$$
		and
		$$
		\depth\,S/I_c(G)^k\,\ge\,\depth\,S/I_c(G)^{k+1},
		$$
		for all $k\ge1$.
	\end{Theorem}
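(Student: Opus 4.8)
The plan is to induct on $c=c(G)$ after normalising. First I would reduce to graphs without isolated vertices: if $W$ denotes the isolated vertices and $G^\circ=G_{[n]\setminus W}$, then $I_c(G)={\bf x}_W\,I_c(G^\circ)$ and hence $I_c(G)^k={\bf x}_W^{k}\,I_c(G^\circ)^k$. Since multiplying a monomial ideal by a fixed monomial only shifts its minimal free resolution, $\reg I_c(G)^k=k|W|+\reg I_c(G^\circ)^k$ and $\depth S/I_c(G)^k=|W|+\depth S^\circ/I_c(G^\circ)^k$ (with $S^\circ=K[x_i:i\notin W]$), and both are compatible with the stated formulas and preserve monotonicity. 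For the base case $c=1$ the graph $G$ is connected on at least two vertices, so \cite[Theorem B]{F25b} guarantees that every power $I_c(G)^k$ has a linear resolution, giving $\reg I_c(G)^k=(n-2)k$, which is the claimed value since $\min(k,c-1)=0$.

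For the inductive step I would peel off one connected component. Write $G=G_1\sqcup G'$ with $G_1$ connected on $V_1$ ($|V_1|=n_1$) and $G'$ the remaining $c-1$ components on $V'=[n]\setminus V_1$ ($|V'|=n'$). With $A=I_c(G_1)$, $B=I_c(G')$, $a={\bf x}_{V_1}$, $b={\bf x}_{V'}$ one has $I_c(G)=bA+aB$ and $I_c(G)^k=\sum_{t=0}^{k}a^{k-t}b^{t}A^{t}B^{k-t}$. The key elementary fact is that every generator of $A$ (resp.\ $B$) divides $a$ (resp.\ $b$), so $(a^i)\subseteq A^i$ and $(b^i)\subseteq B^i$; this makes all relevant intersections collapse. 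I would then use the splitting $I_c(G)^k=J+L$ with $L=(aB)^k=a^kB^k$ and $J=bA\,I_c(G)^{k-1}$, for which the collapse gives the single clean intersection $J\cap L=a^k b\,B^{k-1}$, a monomial times a power of $B$.

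Next I would compute regularities, writing $T=(n-2)k+\min(k,c-1)$. Since $A$ is the complementary edge ideal of a connected graph, $\reg A^t=(n_1-2)t$ by \cite[Theorem B]{F25b}; by induction $\reg B^s=(n'-2)s+\min(s,c-2)$; and for ideals in disjoint variables regularity is additive over products, $\reg(A^tB^s)=\reg A^t+\reg B^s$. Combined with the monomial-shift rule these give $\reg L=(n-2)k+\min(k,c-2)\le T$ and $\reg(J\cap L)=T+1$ (using $\min(k,c-1)=1+\min(k-1,c-2)$). To bound $J$ I would run the filtration $J_t=(bA)^t I_c(G)^{k-t}$, where $J_t=Q_t+J_{t+1}$ with $Q_t=a^{k-t}b^tA^tB^{k-t}$ and $Q_t\cap J_{t+1}=a^{k-t}b^{t+1}A^tB^{k-t-1}$; iterating the regularity inequalities of the short exact sequences and using the additivity above yields $\reg J\le T$. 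Finally the short exact sequence $0\to J\cap L\to J\oplus L\to I_c(G)^k\to 0$ delivers both bounds: $\reg I_c(G)^k\le\max\{\reg J,\reg L,\reg(J\cap L)-1\}=T$, and, because $\reg J,\reg L\le T<T+1=\reg(J\cap L)$, at the bidegree realising $\reg(J\cap L)$ the graded Betti numbers of $J$ and $L$ vanish, so the connecting map in the long exact sequence of $\Tor$ forces a nonzero $\Tor_{i+1}(I_c(G)^k)$ in a bidegree witnessing $T$, i.e.\ $\reg I_c(G)^k\ge T$.

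For the depth statement I would track projective dimension through the same exact sequences. Monomial shifts and flat base change give $\pd_S L=\pd_{S_2}B^k$ and $\pd_S(J\cap L)=\pd_{S_2}B^{k-1}$, while the Künneth formula in disjoint variables gives $\pd_S Q_t=\pd_{S_1}A^t+\pd_{S_2}B^{k-t}$, so every piece is expressed through projective dimensions of powers of $A$ (known) and of $B$ (induction). Propagating these yields an explicit value of $\pd_S I_c(G)^k$, whence $\depth S/I_c(G)^k$ by Auslander--Buchsbaum, and comparing consecutive exponents gives the desired $\depth S/I_c(G)^k\ge\depth S/I_c(G)^{k+1}$. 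The main obstacle, in both the regularity and the depth arguments, is the exact control of the recursive summand $J=bA\,I_c(G)^{k-1}$: one must show it never overshoots the target, and the subtle point is that the extra unit distinguishing $\min(k,c-1)$ from $\min(k,c-2)$ is created \emph{solely} by the intersection $J\cap L=a^k b\,B^{k-1}$ and must be shown to survive in the long exact sequence of $\Tor$ — equivalently, that the displayed decompositions are genuine Betti splittings. Establishing this non-cancellation, together with the bookkeeping of the $\min$-functions across the filtration, is where the real work lies.
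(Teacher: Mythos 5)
Your overall strategy --- induction on $c(G)$, base case from \cite[Theorem B]{F25b}, and the telescoping decomposition $I_c(G)^k=\sum_{t}a^{k-t}b^{t}A^{t}B^{k-t}$ with all intersections collapsed via $(a)\subseteq A$ and $(b)\subseteq B$ --- is exactly the paper's (the paper peels the terms off in the opposite order, which is immaterial), and your computations of $J\cap L=a^kb\,B^{k-1}$ and $Q_t\cap J_{t+1}=a^{k-t}b^{t+1}A^tB^{k-t-1}$ are correct. For the \emph{regularity} statement your argument is essentially complete, and in one respect lighter than the paper's: the upper bound needs only the iterated short exact sequences, and for the lower bound the strict gap $\reg(J\cap L)=T+1>T\ge\max\{\reg J,\reg L\}$ already forces the connecting map $\Tor_{i+1}^S(K,I_c(G)^k)_j\to\Tor_i^S(K,J\cap L)_j$ to be surjective at the extremal bidegree, so no Betti-splitting hypothesis is needed there. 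Your closing remark that this survival is ``equivalent'' to the decompositions being Betti splittings therefore overstates what is required for this half of the theorem.

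The genuine gap is in the \emph{depth} statement. The short exact sequence $0\to J\cap L\to J\oplus L\to I_c(G)^k\to 0$ only yields the one-sided inequality $\depth S/I_c(G)^k\ge\min\{\depth S/J,\,\depth S/L,\,\depth S/(J\cap L)-1\}$; to ``propagate an explicit value of $\pd_S I_c(G)^k$'' and then compare consecutive powers you need equality, i.e.\ that each step of the decomposition is a genuine Betti splitting, and here there is no regularity-gap trick to fall back on: the depths of the three terms can coincide, and cancellation in $\Tor$ cannot be excluded by degree reasons alone. You explicitly name this as ``where the real work lies'' but do not carry it out. The paper closes precisely this hole by proving $\partial({\bf x}_{[n]}^{\ell}I_1^{k-\ell})\subseteq{\bf x}_{[n]}^{\ell-1}I_1^{k-\ell+1}$ (this uses that the component has no isolated vertices, so every $x_i$ occurs in a generator ${\bf x}_{[n]}/(x_ix_j)$) and then invoking the Eliahou--Kervaire/Nguyen--Vu $\lcm$ criterion (Lemmas \ref{lemma:criteria-bs} and \ref{lemma:bs}) to obtain $\Tor$-vanishing of both inclusion maps, hence the exact $\min$-formula for depth. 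You would need to supply this (or an equivalent) argument, and you should also record the base case $c(G)=1$ of the depth monotonicity, which the paper obtains from \cite[Proposition 10.3.4]{HHBook}.
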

	
	For the proof of this result we recall  few facts. Let $I,I_1,I_2\subset S$ be monomial ideals such that $\mathcal{G}(I)=\mathcal{G}(I_1)\sqcup\mathcal{G}(I_2)$ is the disjoint union of $\mathcal{G}(I_1)$ and $\mathcal{G}(I_2)$. Following \cite[Definition 1.1]{FHT}, we say that $I=I_1+I_2$ is a \textit{Betti splitting} if
	\begin{equation}\label{eq:BettiSplitEq}
		\beta_{i,j}(I)=\beta_{i,j}(I_1)+\beta_{i,j}(I_2)+\beta_{i-1,j}(I_1\cap I_2),\ \ \ \text{for all}\ i,j\ge0.
	\end{equation}
	In this case, by \cite[Corollary 2.2]{FHT} we have
	\begin{align*}
		\depth\,S/I\ &=\ \min\{\depth\,S/I_1,\,\depth\,S/I_2,\,\depth\,S/(I_1\cap I_2)-1\},\\
		\reg\,I\ &=\ \max\{\reg\,I_1,\,\reg\,I_2,\,\reg\,I_1\cap I_2-1\}.
	\end{align*}
	
	Consider the natural short exact sequence $0\rightarrow I_1\cap I_2\rightarrow I_1\oplus I_2\rightarrow I\rightarrow0$. By \cite[Proposition 2.1]{FHT}, equation (\ref{eq:BettiSplitEq}) holds, that is $I=I_1+I_2$ is a Betti splitting, if and only if, the induced maps  
	$$
	\Tor_i^S(K,I_1\cap I_2)\rightarrow\Tor_i^S(K,I_1)\oplus\Tor_i^S(K,I_2)
	$$
	in $\Tor$ of the above sequence
	are zero for all $i\ge0$. If this is the case, we say that the inclusion maps $I_1\cap I_2\rightarrow I_1$ and $I_1\cap I_2\rightarrow I_2$ are \textit{$\Tor$-vanishing}.
	
	We quote \cite[Proposition 3.1]{EK} (see also \cite[Lemma 4.2]{NV}).
	
	\begin{Lemma}\label{lemma:criteria-bs}
		Let $J,L\subset S$ be non-zero monomial ideals with $J\subset L$. Suppose there exists a map $\varphi:\mathcal{G}(J)\rightarrow\mathcal{G}(L)$ such that for any $\emptyset\ne\Omega\subseteq \mathcal{G}(J)$ we have
		$$
		\lcm(u:u\in\Omega)\in \mathfrak{m}\cdot(\lcm(\varphi(u):u\in\Omega)),
		$$
		where $\mathfrak{m}=(x_1,\dots,x_n)$. Then the inclusion map $J\rightarrow L$ is $\Tor$-vanishing.
	\end{Lemma}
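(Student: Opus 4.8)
The plan is to produce an explicit comparison map $\Phi\colon T_\bullet\to T_\bullet'$ between the Taylor complexes resolving $S/J$ and $S/L$ that lifts the canonical surjection $\pi\colon S/J\to S/L$ and all of whose entries in homological degree $\ge1$ are non-units, so that $\Phi\otimes_S K$ vanishes in those degrees. Since the Taylor complex is a (generally non-minimal) free resolution, we have $\Tor_j^S(K,S/J)=H_j(T_\bullet\otimes_S K)$ and likewise for $S/L$, and a lift of $\pi$ induces the map $\Tor_j^S(K,\pi)$ on homology; once $\Phi\otimes_S K$ is zero for $j\ge1$, those induced maps vanish. By the degree-shift isomorphism $\Tor_i^S(K,J)\cong\Tor_{i+1}^S(K,S/J)$ for $i\ge1$, which is natural in the short exact sequence $0\to J\to S\to S/J\to0$, this yields $\Tor_i^S(K,\iota)=0$ for $i\ge1$, where $\iota\colon J\hookrightarrow L$; and $\Tor_0^S(K,\iota)=0$ follows at once because each minimal generator of $J$ lies in $\mathfrak{m}L$. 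Together these say exactly that $\iota$ is $\Tor$-vanishing.

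First I would fix notation. Writing $\mathcal{G}(J)=\{u_1,\dots,u_m\}$ and $w_j=\varphi(u_j)\in\mathcal{G}(L)$, the hypothesis applied to the singleton $\Omega=\{u_j\}$ gives $u_j\in\mathfrak{m}\,w_j$, so $c_j:=u_j/w_j$ is a well-defined monomial lying in $\mathfrak{m}$. I recall the Taylor complex $T_\bullet$ of $S/J$ as the exterior algebra on the free module with basis $e_{\{u_1\}},\dots,e_{\{u_m\}}$ and unit $e_\emptyset$ in homological degree $0$, equipped with its standard DG-algebra structure: the basis element $e_\Omega$ has multidegree $\lcm(u:u\in\Omega)$, the differential satisfies $d(e_{\{u_j\}})=u_j\,e_\emptyset$ and the Leibniz rule, and the product is $e_{\Omega_1}e_{\Omega_2}=\pm\bigl(\lcm(\Omega_1)\lcm(\Omega_2)/\lcm(\Omega_1\cup\Omega_2)\bigr)\,e_{\Omega_1\cup\Omega_2}$ when $\Omega_1\cap\Omega_2=\emptyset$ and $0$ otherwise; the same structure is placed on $T_\bullet'$ for $S/L$.

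Next I would define $\Phi$ as the graded $S$-algebra homomorphism determined by $e_{\{u_j\}}\mapsto c_j\,e_{\{w_j\}}$ and $e_\emptyset\mapsto e_\emptyset$. This is well-defined because each image squares to zero in the graded-commutative target, and it commutes with the differentials on the algebra generators, since $d(c_j e_{\{w_j\}})=c_j w_j\,e_\emptyset=u_j\,e_\emptyset=\Phi(d\,e_{\{u_j\}})$. Being an algebra map that intertwines the differentials on generators, $\Phi$ is automatically a chain map by the Leibniz rule, and it covers $\pi$ in homological degree $0$. A short computation using the relation $e_{\{u_{j_1}\}}\cdots e_{\{u_{j_k}\}}=\pm\bigl(\textstyle\prod_\ell u_{j_\ell}/\lcm(\Omega)\bigr)e_\Omega$ then evaluates $\Phi$ on an arbitrary basis element $e_\Omega$: if $\varphi$ is injective on $\Omega$, then $\Phi(e_\Omega)=\pm\bigl(\lcm(\Omega)/\lcm(\varphi(\Omega))\bigr)\,e_{\varphi(\Omega)}$, whose coefficient lies in $\mathfrak{m}$ precisely by the hypothesis applied to that $\Omega$; and if $\varphi$ is not injective on $\Omega$, two coinciding singleton factors $e_{\{w\}}e_{\{w\}}=0$ force the product, hence $\Phi(e_\Omega)$, to vanish (the transition scalar being a nonzerodivisor in the domain $S$). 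Thus every entry of $\Phi$ in homological degree $\ge1$ lies in $\mathfrak{m}$, so $\Phi\otimes_S K=0$ there, and the conclusion is read off as in the first paragraph.

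The main obstacle is the verification that $\Phi$ is genuinely a chain map, and in particular that it behaves correctly on those $\Omega$ for which $\varphi$ fails to be injective, where a naive ``relabel the index set'' assignment $e_\Omega\mapsto c_\Omega e_{\varphi(\Omega)}$ would not even land in the correct homological degree. Routing the construction through the DG-algebra structure of the Taylor complex is what dissolves this difficulty cleanly: multiplicativity forces the correct Koszul signs and automatically sends such $e_\Omega$ to zero, so that the hypothesis $\lcm(u:u\in\Omega)\in\mathfrak{m}\cdot\lcm(\varphi(u):u\in\Omega)$ is invoked for exactly the surviving (injective) subsets, and nothing needs to be checked by hand for the remaining ones.
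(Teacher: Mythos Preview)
The paper does not prove this lemma at all: it merely quotes it from \cite[Proposition~3.1]{EK} (see also \cite[Lemma~4.2]{NV}), so there is no in-paper argument to compare against. Your Taylor-complex approach is correct and is in fact the standard proof underlying those references: build a comparison map between the Taylor resolutions of $S/J$ and $S/L$ whose matrices in positive homological degree have all entries in $\mathfrak{m}$, and read off $\Tor$-vanishing.

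One point deserves tightening. You write that $\Phi$ is ``the graded $S$-algebra homomorphism determined by $e_{\{u_j\}}\mapsto c_j e_{\{w_j\}}$'', but the Taylor complex is \emph{not} free as a graded-commutative $S$-algebra on its degree-one elements (unlike the Koszul complex), so an assignment on generators does not automatically extend to an algebra map. What actually happens, and what your subsequent computation implicitly uses, is this: from the identity $e_{\{u_{j_1}\}}\cdots e_{\{u_{j_k}\}}=\pm\bigl(\prod_\ell u_{j_\ell}/\lcm(\Omega)\bigr)e_\Omega$ one deduces that, if a multiplicative $\Phi$ exists, then necessarily $\Phi(e_\Omega)=\pm\bigl(\lcm(\Omega)/\lcm(\varphi(\Omega))\bigr)e_{\varphi(\Omega)}$ when $\varphi|_\Omega$ is injective and $\Phi(e_\Omega)=0$ otherwise (cancelling the nonzerodivisor $\prod_\ell u_{j_\ell}/\lcm(\Omega)$ in the free $S$-module $T'_\bullet$ is legitimate, and $\lcm(\varphi(\Omega))\mid\lcm(\Omega)$ since each $w_j\mid u_j$). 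One then checks directly that this explicit formula defines a chain map; the injective case is a one-line computation, and in the non-injective case the two surviving boundary terms cancel by the usual Koszul sign rule. Once phrased this way your argument is complete, and the DGA viewpoint is best regarded as a heuristic that produces the correct formula and signs rather than as a universal-property shortcut.
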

	
	Let $I\subset S$ be a monomial ideal. We denote by $\partial I$ the ideal generated by the elements $u/x_i$, with $u\in\mathcal{G}(I)$ and $i\in\supp(u)$.\smallskip
	
	The next result essentially follows from \cite[Proposition 4.4]{NV} due to Nguyen and Vu. See also \cite[Lemma 1.3 and Theorem 1.2]{CFL}.
	
	\begin{Lemma}\label{lemma:bs}
		Let $J,L\subset S$ be non-zero monomial ideals. Suppose that $\partial J\subset L$. Then $J\subseteq\mathfrak{m}L$ and there exists a map $\varphi:\mathcal{G}(J)\rightarrow\mathcal{G}(L)$ satisfying the assumptions of Lemma \ref{lemma:criteria-bs}. In particular, the inclusion map $J\rightarrow L$ is $\Tor$-vanishing.
	\end{Lemma}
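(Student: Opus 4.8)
The plan is to produce an explicit map $\varphi$ and verify the $\mathrm{lcm}$-condition of Lemma~\ref{lemma:criteria-bs} by hand, so that the $\Tor$-vanishing drops out formally. First I would dispose of the containment $J\subseteq\mathfrak{m}L$. Since $J$ is a non-zero proper monomial ideal, every $u\in\mathcal{G}(J)$ has $\supp(u)\neq\emptyset$; choosing any $i\in\supp(u)$, the hypothesis $\partial J\subseteq L$ gives $u/x_i\in L$, and hence $u=x_i\cdot(u/x_i)\in\mathfrak{m}L$. As this holds for every generator, $J\subseteq\mathfrak{m}L$; in particular $J\subseteq L$, so Lemma~\ref{lemma:criteria-bs} will apply as soon as a suitable $\varphi$ is in hand.

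Next I would build $\varphi$ by a uniform, order-sensitive choice. For each $u\in\mathcal{G}(J)$ I set $i(u)=\max\supp(u)$; since $u/x_{i(u)}\in\partial J\subseteq L$, I let $\varphi(u)\in\mathcal{G}(L)$ be any generator dividing $u/x_{i(u)}$. Then $\varphi(u)\mid u/x_{i(u)}\mid u$, so in particular $\lcm(\varphi(u):u\in\Omega)$ divides $\lcm(u:u\in\Omega)$ for every subset $\Omega$; the real content of the lemma is that this divisibility is always proper in the strong sense required by $\mathfrak{m}\cdot(\,\cdot\,)$.

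The crux is to check, for an arbitrary $\emptyset\neq\Omega\subseteq\mathcal{G}(J)$, that $\lcm(u:u\in\Omega)\in\mathfrak{m}\cdot(\lcm(\varphi(u):u\in\Omega))$. Writing $w=\lcm(u:u\in\Omega)$ and $N=\max\supp(w)$, I claim the single variable $x_N$ records the drop uniformly: $\deg_{x_N}(\varphi(u))\le\deg_{x_N}(w)-1$ for every $u\in\Omega$. Indeed, if $N\notin\supp(u)$ then $\deg_{x_N}(\varphi(u))=0\le\deg_{x_N}(w)-1$, using $\deg_{x_N}(w)\ge 1$; whereas if $N\in\supp(u)$ then, because $N$ is the largest index appearing in any member of $\Omega$, necessarily $i(u)=\max\supp(u)=N$, so $\varphi(u)\mid u/x_N$ and $\deg_{x_N}(\varphi(u))\le\deg_{x_N}(u)-1\le\deg_{x_N}(w)-1$. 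Taking the maximum over $u\in\Omega$ gives $\deg_{x_N}(\lcm(\varphi(u):u\in\Omega))<\deg_{x_N}(w)$, which together with the divisibility above yields the desired membership. Lemma~\ref{lemma:criteria-bs} then shows that $J\to L$ is $\Tor$-vanishing.

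The main obstacle is precisely that the hypothesis only supplies, for each individual generator, a one-coordinate drop, while the $\mathrm{lcm}$-condition must hold simultaneously across all members of an arbitrary $\Omega$. The key idea that makes this go through is the coherent choice $i(u)=\max\supp(u)$: it forces the top variable $x_N$ of the joint $\mathrm{lcm}$ to be exactly the variable stripped from every generator of $\Omega$ that attains index $N$, so one coordinate witnesses the drop for the whole family at once. A less disciplined choice of $\varphi$ (removing different variables for different generators) could easily fail the condition for some $\Omega$, so I expect verifying this uniformity to be the delicate part of the argument.
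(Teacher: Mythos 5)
Your proof is correct and complete. The paper itself gives no argument for this lemma, deferring instead to \cite[Proposition 4.4]{NV} and \cite[Lemma 1.3]{CFL}; your construction --- stripping the variable $x_{i(u)}$ with $i(u)=\max\supp(u)$ and observing that the top variable $x_N$ of $\lcm(u:u\in\Omega)$ then witnesses a uniform degree drop for the whole family --- is precisely the standard argument behind those references, and all the steps (existence of $\varphi(u)\in\mathcal{G}(L)$ dividing $u/x_{i(u)}$, the divisibility $\lcm(\varphi(u):u\in\Omega)\mid\lcm(u:u\in\Omega)$, and the strict drop in $x_N$-degree) check out.
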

	
	The next two basic remarks will be used without reference in what follows.
	\begin{Remark}
		Let $I\subset S$ be a graded ideal and let $f\in S$ be a homogeneous element. Then $\reg\,fI=\reg\,I+\deg(f)$ and $\depth\,S/(fI)=\depth\,S/I$.
	\end{Remark}
	\begin{Remark}
		Let $I_1 \subset S_1=K[x_1,\ldots,x_n]$ and $I_2 \subset S_2=K[y_1,\ldots,y_m]$ be proper monomial ideals in polynomial rings over disjoint sets of variables. Then, we have $\reg\,I_1I_2=\reg\,I_1+\reg\,I_2$ and $\depth\,S/(I_1I_2)=\depth\,S_1/I_1+\depth\,S_2/I_2$. Moreover, if $I_2=(1)$ but $I_1\ne(1)$, then $\depth\,S/(I_1I_2)=\depth\,S_1/I_1+m$. Similarly, if $I_1=(1)$ but $I_2\ne(1)$, then $\depth\,S/(I_1I_2)=\depth\,S_2/I_2+n$.
	\end{Remark}
	
	Now, we are ready to prove Theorem \ref{thm:Ic(G)regdepth}.
	\begin{proof}[Proof of Theorem \ref{thm:Ic(G)regdepth}]
		We proceed by induction on $c=c(G)\ge1$. If $c=1$, it follows by \cite[Theorem B]{F25b} that $I_c(G)^k$ has linear resolution for all $k\ge1$. That is, $\reg\,I_c(G)^k=(|V(G)|-2)k$ for all $k\ge1$. Furthermore, by \cite[Proposition 10.3.4]{HHBook} it follows that the depth function $k\mapsto\depth\,S/I_c(G)^k$ is non-increasing.
		
		Now, let $c>1$ and write $G=G_1\sqcup G_2$, with $c(G_1)=c-1$ and $c(G_2)=1$ such that $G_1$ contains no isolated vertices of $G$. Identifying the variables of $S$ with the vertices of $G$, after a suitable renaming, we may assume that $V(G_1)=\{x_1,\dots,x_n\}$ and $V(G_2)=\{y_1,\dots,y_m\}$. Let $$I_1\ =\ ({\bf x}_{[n]}/(x_ix_j)\ :\ \{x_i,x_j\}\in E(G_1))\subset S$$ be the complementary edge ideal of $G_1$ with respect to the set of variables $\{x_1,\dots,x_n\}$. Similarly, let $I_2=({\bf y}_{[m]}/(y_iy_j)\ :\ \{y_i,y_j\}\in E(G_2))\subset S$ be the complementary edge ideal of $G_2$ with respect to the set of variables $\{y_1,\dots,y_m\}$. Since $c(G_1),c(G_2)<c$, by induction both ideals $I_1$ and $I_2$ satisfy the statement.
		
		Let $I=I_c(G)$ and notice that $I={\bf y}_{[m]}I_1+{\bf x}_{[n]}I_2$. Hence,
		$$
		I^k\ = \ \sum_{h=0}^k{\bf y}_{[m]}^{k-h}{\bf x}_{[n]}^hI_1^{k-h}I_2^h
		$$
		for all $k\ge1$. Fix an integer $k\ge1$, and set $J_\ell=\sum_{h=0}^\ell{\bf y}_{[m]}^{k-h}{\bf x}_{[n]}^hI_1^{k-h}I_2^h$ for $0\le\ell\le k$.
		
		Notice that $J_k=I^k$. For each $0<\ell\le k$, we claim that
		\begin{equation}\label{eq:Iell-bs}
			J_\ell\ =\ J_{\ell-1}+{\bf y}_{[m]}^{k-\ell}{\bf x}_{[n]}^\ell I_1^{k-\ell}I_2^\ell
		\end{equation}
		is a Betti splitting of $J_\ell$. First, we notice that
		$$
		\mathcal{G}(J_\ell)\ =\ \mathcal{G}(J_{\ell-1})\sqcup\mathcal{G}({\bf y}_{[m]}^{k-\ell}{\bf x}_{[n]}^\ell I_1^{k-\ell}I_2^\ell).
		$$
		
		To this end, just observe that any $u\in\mathcal{G}(J_{\ell-1})$ and $v\in\mathcal{G}({\bf y}_{[m]}^{k-\ell}{\bf x}_{[n]}^\ell I_1^{k-\ell}I_2^\ell)$ have the same degree $(n+m-2)k$, but they are different because
		$$
		\sum_{i=1}^n\deg_{x_i}(u)=h n+(n-2)(k-h)=(n-2)k+2h< (n-2)k+2\ell=\sum_{i=1}^n\deg_{x_i}(v),
		$$
		for some $0\le h<\ell$. Now, we compute the intersection
		\begin{align*}
			L\ =\ J_{\ell-1}\cap({\bf y}_{[m]}^{k-\ell}{\bf x}_{[n]}^\ell I_1^{k-\ell}I_2^\ell)\ &=\ (\sum_{h=0}^{\ell-1}{\bf y}_{[m]}^{k-h}{\bf x}_{[n]}^hI_1^{k-h}I_2^h)\cap({\bf y}_{[m]}^{k-\ell}{\bf x}_{[n]}^\ell I_1^{k-\ell}I_2^\ell)\\
			&=\phantom{\ (}\sum_{h=0}^{\ell-1}[({\bf y}_{[m]}^{k-h}{\bf x}_{[n]}^hI_1^{k-h}I_2^h)\cap({\bf y}_{[m]}^{k-\ell}{\bf x}_{[n]}^\ell I_1^{k-\ell}I_2^\ell)]\\
			&=\phantom{\ (}\sum_{h=0}^{\ell-1}[({\bf y}_{[m]}^{k-h}I_2^h)\cap({\bf y}_{[m]}^{k-\ell}I_2^\ell)][({\bf x}_{[n]}^hI_1^{k-h})\cap({\bf x}_{[n]}^\ell I_1^{k-\ell})].
		\end{align*}
		
		Notice that $({\bf x}_{[n]})\subset I_1$ and $({\bf y}_{[m]})\subset I_2$. Now, for any $h<\ell$, we have $k-h>k-\ell$ and so ${\bf y}_{[m]}^{k-h}I_2^h={\bf y}_{[m]}^{k-\ell}{\bf y}_{[m]}^{\ell-h}I_2^h\subset{\bf y}_{[m]}^{k-\ell}I_2^{\ell-h}I_2^h={\bf y}_{[m]}^{k-\ell}I_2^\ell$. Similarly, we have ${\bf x}_{[n]}^\ell I_1^{k-\ell}\subset{\bf x}_{[n]}^h I_1^{k-h}$. Consequently,
		\begin{align*}
			L\ =\ J_{\ell-1}\cap({\bf y}_{[m]}^{k-\ell}{\bf x}_{[n]}^\ell I_1^{k-\ell}I_2^\ell)\ &=\  \sum_{h=0}^{\ell-1}({\bf y}_{[m]}^{k-h}I_2^h{\bf x}_{[n]}^\ell I_1^{k-\ell})\ =\, (\sum_{h=0}^{\ell-1}{\bf y}_{[m]}^{k-h}I_2^h)({\bf x}_{[n]}^\ell I_1^{k-\ell})\\[3pt]
			&=\ {\bf y}_{[m]}^{k-\ell+1}I_2^{\ell-1}{\bf x}_{[n]}^\ell I_1^{k-\ell},
		\end{align*}
		where we used that $({\bf y}_{[m]}^k)\subset{\bf y}_{[m]}^{k-1}I_2\subset\cdots\subset{\bf y}_{[m]}^{k-(\ell-1)}I_2^{\ell-1}$.
		
		To conclude that (\ref{eq:Iell-bs}) is indeed a Betti splitting we must prove that the inclusion maps $L\rightarrow J_{\ell-1}$ and $L\rightarrow{\bf y}_{[m]}^{k-\ell}{\bf x}_{[n]}^\ell I_1^{k-\ell}I_2^\ell$ are $\Tor$-vanishing.\smallskip
		
		We show that the first map is $\Tor$-vanishing, the other case is similar. We claim that $\partial({\bf x}_{[n]}^\ell I_1^{k-\ell})\subset{\bf x}_{[n]}^{\ell-1} I_1^{k+1-\ell}$. Indeed, take $u={\bf x}_{[n]}^\ell u_1\cdots u_{k-\ell}\in\mathcal{G}({\bf x}_{[n]}^\ell I_1^{k-\ell})$ with $u_s\in\mathcal{G}(I_1)$ for all $s=1,\dots,k-\ell$, and let $x_i$ divide $u$. Since $G_1$ does not contain isolated vertices, there exists $x_j\in V(G_1)$ for which $\{x_i,x_j\}\in E(G_1)$. Then
		$$
		{\bf x}_{[n]}^{\ell-1}u_1\cdots u_{k-\ell}({\bf x}_{[n]}/(x_ix_j))\ \ \textup{divides}\ \ u/x_i.
		$$
		
		This shows that $u/x_i\in {\bf x}_{[n]}^{\ell-1} I_1^{k+1-\ell}$. Hence $\partial({\bf x}_{[n]}^\ell I_1^{k-\ell})\subset{\bf x}_{[n]}^{\ell-1} I_1^{k+1-\ell}$. Lemma \ref{lemma:bs} implies that there exists a map $\varphi:\mathcal{G}({\bf x}_{[n]}^\ell I_1^{k-\ell})\rightarrow\mathcal{G}({\bf x}_{[n]}^{\ell-1} I_1^{k+1-\ell})$ such that for any non-empty subset $\Omega\subset\mathcal{G}({\bf x}_{[n]}^\ell I_1^{k-\ell})$ we have
		\begin{equation}\label{eq:lcm}
			\lcm(u:u\in\Omega)\ \in\ \n\cdot(\lcm(\varphi(u):u\in\Omega)),
		\end{equation}
		where $\n=(x_1,\dots,x_n)$. Next, we define a map
		$$
		\Phi:\mathcal{G}(L)\rightarrow\mathcal{G}(J_{\ell-1})
		$$
		using the map $\varphi$. Let $u\in\mathcal{G}(L)$. Then $u=u_xu_y$, where $u_x\in\mathcal{G}({\bf x}_{[n]}^\ell I_1^{k-\ell})$ and $u_y\in\mathcal{G}({\bf y}_{[m]}^{k+1-\ell}I_2^{\ell-1})$. We put $\Phi(u)=\varphi(u_x)u_y$. Notice that $\Phi$ is well defined, because $\Phi(u)\in\mathcal{G}(({\bf x}_{[n]}^{\ell-1} I_1^{k+1-\ell})({\bf y}_{[m]}^{k+1-\ell}I_2^{\ell-1}))\subset\mathcal{G}(J_{\ell-1})$. Finally, let $\Omega\subset\mathcal{G}(L)$ be non-empty. Then $\lcm(u:u\in\Omega)= \lcm(u_x:u\in\Omega)\cdot\lcm(u_y:u\in\Omega)$ and by using (\ref{eq:lcm}) we have
		\begin{align*}
			\lcm(u:u\in\Omega)\ \in&\ \n\cdot(\lcm(\varphi(u_x):u\in\Omega))\cdot\lcm(u_y:u\in\Omega)\\
			=&\ \n\cdot(\lcm(\varphi(u_x)u_y:u\in\Omega))\\
			\subset&\ \m\cdot(\lcm(\Phi(u):u\in\Omega)),
		\end{align*}
		where $\m=(x_1,\dots,x_n,y_1,\dots,y_m)$. Lemma \ref{lemma:criteria-bs} implies that the inclusion $L\rightarrow J_{\ell-1}$ is indeed $\Tor$-vanishing, as wanted.\smallskip
		
		Let $S_1=K[x_1,\dots,x_n]$ and $S_2=K[y_1,\dots,y_m]$. Next, we claim that
		\begin{align}
			\label{eq:regbs}\reg\,I^k=\max&\left\{\substack{\displaystyle\reg\,I_1^k+mk,(n+m-2)k+1,\\[5pt]\displaystyle\max_{0<h<k}\{\reg\,I_1^{k-h}\!+mk+(n-2)h+1\}}\right\},\\[4pt]
			\label{eq:depthbs}\depth\frac{S}{I^k}=\min&\left\{\substack{\displaystyle\!\depth\,S_1/I_1^k\!+m,\depth\,S_2/I_2^k\!+n,\depth\,S_2/I_2^{k-1}\!+n-1,\\[5pt]\displaystyle\min_{0<h<k}\{\depth\,S_1/I_1^{k-h}+\depth\,S_2/I_2^h\}}\right\}.
		\end{align}
		
		Since (\ref{eq:Iell-bs}) is a Betti splitting and $I^k=J_k$, we have
		\begin{align*}
			\reg\,I^k&=\max\{\reg\,J_{k-1},\,\reg({\bf x}_{[n]}^kI_2^k),\,\reg({\bf y}_{[m]}{\bf x}_{[n]}^kI_2^{k-1})-1\},\\
			\depth\,S/I^k&=\min\{\depth\,S/J_{k-1},\,\depth\,S/({\bf x}_{[n]}^kI_2^k),\,\depth\,S/({\bf y}_{[m]}{\bf x}_{[n]}^kI_2^{k-1})-1\}.
		\end{align*}
		
		Since $c(G_2)=1$, we have $\reg({\bf x}_{[n]}^kI_2^k)=(n+m-2)k$ for all $k\ge1$. Analogously, $\reg({\bf y}_{[m]}{\bf x}_{[n]}^kI_2^{k-1})-1=m+nk+(m-2)(k-1)-1=(n+m-2)k+1$. Similar computations can be performed for the depth. Hence,
		\begin{align*}
			\reg\,I^k&=\max\{\reg\,J_{k-1},\,(n+m-2)k+1\},\\
			\depth\,S/I^k&=\min\{\depth\,S/J_{k-1},\,\depth\,S_2/I_2^k+n,\,\depth\,S/I_2^{k-1}+n-1\}.
		\end{align*}
		
		Iterating these computations to $J_{k-1},\dots,J_0$ by using the Betti splittings (\ref{eq:Iell-bs}), we see that the formulas (\ref{eq:regbs}) and (\ref{eq:depthbs}) indeed hold.
		
		By induction, the functions $k\mapsto\depth\,S_1/I_1^k$, $k\mapsto\depth\,S_2/I_2^k$ are non-increasing. It follows at once from   formula (\ref{eq:depthbs}) that $k\mapsto\depth\,S/I^k$ is non-increasing too.
		
		Now, since $I_1=I_c(G_1)$, $|V(G_1)|=n$ and $c(G_1)=c-1$, by induction we have
		\begin{equation}\label{IndI1}
			\reg\,I_1^k\ =\ \begin{cases}
				(n-1)k&\textit{if}\ \ 1\le k\le c-3,\\
				(n-2)k+c-2&\textit{if}\ \ k\ge c-2.
			\end{cases} 
		\end{equation}
		
		Let $1\le k\le c-2$. Then by (\ref{IndI1}) it follows that $\reg\,I_1^k=(n-1)k$, since for $k=c-2$ we have $\reg\,I_1^{c-2}=(n-2)(c-2)+(c-2)=(c-2)(n-1)=(n-1)k$. Combining the above formula with formula (\ref{eq:regbs}) we obtain
		\begin{align*}
			\reg\,I^k\ &=\ \max\left\{\substack{\displaystyle(n+m-1)k,\,(n+m-2)k+1,\\[5pt]\displaystyle\max_{0<h<k}\{(n-1)(k-h)+mk+(n-2)h+1\}}\right\}\\
			&=\ \max\left\{\substack{\displaystyle(n+m-1)k,\\[5pt]\displaystyle\max_{0<h<k}\{(n+m-1)k-h+1\}}\right\}\\
			&=\ (n+m-1)k=(|V(G)|-1)k.
		\end{align*}
		
		Now, let $k\ge c-1$. Fix $0<h<k$. Then $k-h\ge c-2$ for $0<h\le k+2-c$ and $k-h\le c-3$ for $h\ge k+3-c$. Since $k\ge c-1$, then $k+2-c\ge1$. Hence, the formula for $\reg\,I_1^{k-h}$ combined with (\ref{eq:regbs}) yields
		\begin{align*}
			\reg\,I^k\ &=\ \max\left\{\substack{\displaystyle(n+m-2)k+c-2,\,(n+m-2)k+1,\\[10pt]\displaystyle\max_{0<h\le k+2-c}\{(n-2)(k-h)+c-2+mk+(n-2)h+1\},\\[3pt]\displaystyle\max_{k+3-c\le h<k}\{(n-1)(k-h)+mk+(n-2)h+1\}}\right\}\\
			&=\ \max\left\{\substack{\displaystyle(n+m-2)k+c-2,\\[3pt]\displaystyle(n+m-2)k+1,\,(n+m-2)k+c-1,\\[3pt]\displaystyle\max_{k+3-c\le h<k}\{(n+m-1)k-h+1\}}\right\}\\[3pt]
			&=\ (n+m-2)+c-1=(|V(G)|-2)k+c(G)-1,
		\end{align*}
		because $c\ge2$. This concludes the inductive proof.
	\end{proof}\medskip
	
	Let $I\subset S$ be a homogeneous ideal. It is known by \cite{CHT,K} that $\reg\,I^k$ is a linear function of the form $ak+b$ for some $a\ge1,b\ge0$ and all $k\gg0$. The smallest value $k_0>0$ for which $\reg\,I^k=ak+b$ for all $k\ge k_0$, is called the \textit{index of regularity stability} of $I$, and is denoted by $\textup{rstab}(I)$.
	\begin{Corollary}
		Let $G$ be a finite simple graph. Then
		$$
		\textup{rstab}(I_c(G))\ =\ \max\{1,c(G)-1\}.
		$$
	\end{Corollary}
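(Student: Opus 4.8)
The plan is to read off $\textup{rstab}(I_c(G))$ directly from the piecewise formula for $\reg\,I_c(G)^k$ furnished by Theorem \ref{thm:Ic(G)regdepth}, and to check that no value in the small-$k$ branch coincides with the eventual linear function. Write $n=|V(G)|$ and $c=c(G)$, and recall that we assume $c\ge1$. The theorem gives $\reg\,I_c(G)^k=(n-1)k$ for $1\le k\le c-2$, and $\reg\,I_c(G)^k=(n-2)k+c-1$ for $k\ge c-1$. Thus the eventual linear function is $ak+b$ with $a=n-2$ and $b=c-1$, and it governs $\reg\,I_c(G)^k$ for every $k\ge c-1$.

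First I would dispose of the case $c\le2$. Here the range $1\le k\le c-2$ is empty, so $\reg\,I_c(G)^k=(n-2)k+c-1$ holds for all $k\ge1$, whence $\textup{rstab}(I_c(G))=1=\max\{1,c-1\}$.

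Next I would treat $c\ge3$, where the branch $1\le k\le c-2$ is non-empty and there $\reg\,I_c(G)^k=(n-1)k$. The only real point to verify is that this value never agrees with the eventual linear function on that branch: the equation $(n-1)k=(n-2)k+c-1$ forces $k=c-1$, which lies strictly outside $\{1,\dots,c-2\}$. In particular the formula already fails at $k=c-2$, while it holds for all $k\ge c-1$, so the least admissible threshold is $\textup{rstab}(I_c(G))=c-1=\max\{1,c-1\}$.

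Combining the two cases gives $\textup{rstab}(I_c(G))=\max\{1,c(G)-1\}$. Beyond the one-line check that the two linear pieces have slopes differing by $1$ and meet only at $k=c-1$, there is no genuine obstacle here: the entire content resides in the regularity computation of Theorem \ref{thm:Ic(G)regdepth}.
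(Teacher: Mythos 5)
Your proof is correct and is exactly the intended derivation: the paper states this corollary as an immediate consequence of Theorem \ref{thm:Ic(G)regdepth} without further argument, and your verification that the two linear pieces differ on the whole range $1\le k\le c-2$ (meeting only at $k=c-1$) is precisely the check that makes the conclusion legitimate.
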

	
	\begin{Corollary}\label{cor:Ic(G)Betti}
		Let $G$ be a finite simple graph. The graded Betti numbers of $I_c(G)^k$ do not depend on the ground field $K$, for all $k\ge1$.
	\end{Corollary}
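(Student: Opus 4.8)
The strategy is to revisit the inductive Betti-splitting machinery developed in the proof of Theorem \ref{thm:Ic(G)regdepth} and to observe that every ingredient there is intrinsically independent of $K$, so that the whole graded Betti table of $I_c(G)^k$ gets assembled from field-independent pieces. I proceed by induction on $c=c(G)$, using the same decomposition. First I would dispose of isolated vertices: if $F$ is the set of isolated vertices of $G$ and $H$ the induced subgraph on the remaining vertices, then $I_c(G)^k={\bf x}_F^{\,k}\,I_c(H)^k$, so $\beta_{i,j}(I_c(G)^k)=\beta_{i,\,j-k|F|}(I_c(H)^k)$ and field-independence transfers verbatim from $H$ to $G$. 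Hence I may assume $G$ has no isolated vertices.

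For the base case $c=1$, \cite[Theorem B]{F25b} guarantees that $I_c(G)^k$ has a linear resolution for every $k\ge1$. For an ideal with $d$-linear resolution every nonzero graded Betti number lies on the single diagonal $\beta_{i,\,d+i}$, so the numerator of the Hilbert series $H_{I_c(G)^k}(t)=\bigl(\sum_{i}(-1)^i\beta_{i}(I_c(G)^k)\,t^{\,d+i}\bigr)\big/(1-t)^n$ determines each $\beta_i$ as a single coefficient. Since the Hilbert series of a monomial ideal depends only on its monomials and not on $K$, the Betti numbers are field-independent in this case.

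For the inductive step I would write $G=G_1\sqcup G_2$ with $c(G_1)=c-1$ and $c(G_2)=1$ exactly as in the proof of Theorem \ref{thm:Ic(G)regdepth}, keep the notation $S_1=K[x_1,\dots,x_n]$, $S_2=K[y_1,\dots,y_m]$, $I_1=I_c(G_1)$, $I_2=I_c(G_2)$, and consider the chain $J_0\subset J_1\subset\cdots\subset J_k=I_c(G)^k$ together with the Betti splittings (\ref{eq:Iell-bs}). The crucial point is that the $\Tor$-vanishing of the two inclusion maps established there is verified purely through the divisibility criteria of Lemmas \ref{lemma:criteria-bs} and \ref{lemma:bs}, which never mention $K$; therefore each (\ref{eq:Iell-bs}) is a Betti splitting over \emph{every} field, and (\ref{eq:BettiSplitEq}) yields
\[
\beta_{i,j}(J_\ell)=\beta_{i,j}(J_{\ell-1})+\beta_{i,j}\bigl({\bf y}_{[m]}^{k-\ell}{\bf x}_{[n]}^\ell I_1^{k-\ell}I_2^\ell\bigr)+\beta_{i-1,j}(L),
\]
with $L={\bf y}_{[m]}^{k-\ell+1}{\bf x}_{[n]}^\ell I_2^{\ell-1}I_1^{k-\ell}$, over any $K$. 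Both the middle term and $L$ are, up to multiplication by a monomial (which only shifts the Betti table), products $I_1^{a}I_2^{b}$ of powers of $I_1\subset S_1$ and $I_2\subset S_2$ in disjoint sets of variables (with the convention $I_j^{0}=S_j$).

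It then remains to know that such products have field-independent Betti numbers, which is the content underlying the disjoint-variables remark invoked earlier: the tensor product over $K$ of the minimal free resolutions of $I_1^{a}$ over $S_1$ and of $I_2^{b}$ over $S_2$ is again minimal over $S=S_1\otimes_K S_2$ (all differential entries lie in $\m$), whence
\[
\beta^{S}_{i,j}\bigl(I_1^{a}I_2^{b}\bigr)=\sum_{\substack{p+q=i\\ s+t=j}}\beta^{S_1}_{p,s}(I_1^{a})\,\beta^{S_2}_{q,t}(I_2^{b}).
\]
By the outer induction on $c$, the Betti numbers of all powers of $I_1$ (as $c(G_1)=c-1$) and of $I_2$ (as $c(G_2)=1$, the base case) are field-independent, hence so are those of every product above and of their monomial multiples. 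Feeding this into the displayed Betti-splitting relation and descending the finite chain $J_k\to J_{k-1}\to\cdots\to J_0={\bf y}_{[m]}^{k}I_1^{k}$ by a secondary induction on $\ell$ shows that $\beta_{i,j}(J_\ell)$ is field-independent for every $\ell$, in particular for $\ell=k$, i.e.\ for $I_c(G)^k$. The one point demanding care, and the only place where a field could a priori intrude, is the validity of the Betti-splitting identity over all fields simultaneously; but this is precisely what the combinatorial nature of Lemmas \ref{lemma:criteria-bs} and \ref{lemma:bs} secures, so no genuine obstacle remains.
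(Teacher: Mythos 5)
Your proposal is correct and follows essentially the same route as the paper: induction on $c(G)$ using the chain of Betti splittings (\ref{eq:Iell-bs}) from the proof of Theorem \ref{thm:Ic(G)regdepth}, with the base case $c(G)=1$ settled by \cite[Theorem B]{F25b} and the intersection $L$ handled as a product of ideals in disjoint sets of variables. The only cosmetic differences are that you derive field-independence in the base case from the linear resolution via the Hilbert series (the paper invokes linear quotients directly) and that you prove the K\"unneth-type formula for products in disjoint variables by tensoring minimal resolutions, where the paper cites \cite[Proposition 4.1]{HRR}.
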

	\begin{proof}
		If $c(G)=1$, by \cite[Theorem B]{F25b}, $I_c(G)^k$ has linear quotients for all $k\ge1$, and hence the graded Betti numbers of $I_c(G)^k$ do not depend on $K$. Now, let $c(G)>1$ and fix $k\ge1$. Using the notation in the proof of Theorem \ref{thm:Ic(G)regdepth}, by induction on $c$, we see that the graded Betti numbers of $J_0={\bf y}_{[m]}^{k}I_1^{k}$ do not depend on $K$. Assume inductively that the graded Betti numbers of $J_{\ell-1}$ do not depend on $K$. Since $$L=J_{\ell-1}\cap ({\bf y}_{[m]}^{k-\ell}{\bf x}_{[n]}^\ell I_1^{k-\ell}I_2^\ell)={\bf y}_{[m]}^{k-\ell+1}I_2^{\ell-1}{\bf x}_{[n]}^\ell I_1^{k-\ell},$$
		and since ${\bf y}_{[m]}^{k-\ell+1}I_2^{\ell-1}$ and ${\bf x}_{[n]}^\ell I_1^{k-\ell}$ live in polynomial rings in disjoint variables, by using \cite[Proposition 4.1]{HRR} and induction on $c$, then the graded Betti numbers of $L$ are independent of $K$. The Betti splitting (\ref{eq:Iell-bs}) 
		implies that the graded Betti numbers of
		$J_\ell$ do not depend on $K$. Since $J_k=I_c(G)^k$, this concludes the proof.
	\end{proof}
	
	\begin{Corollary}\label{cor:somepower}
		The following conditions are equivalent.
		\begin{enumerate}
			\item[\textup{(a)}] $I_c(G)^k$ has linear resolution, for some $k\ge1$ $($equivalently, for all $k\ge1)$.
			\item[\textup{(b)}] $I_c(G)^k$ has linear quotients, for some $k\ge1$ $($equivalently, for all $k\ge1)$.
			\item[\textup{(c)}] $c(G)=1$.
		\end{enumerate}
	\end{Corollary}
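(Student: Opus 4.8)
The plan is to establish the cyclic chain of implications $(c)\Rightarrow(b)\Rightarrow(a)\Rightarrow(c)$, and then to read off the two parenthetical ``for some $k$ (equivalently for all $k\ge1$)'' assertions as a byproduct of the proof. The key observation underpinning everything is that $I_c(G)$ is equigenerated: every generator ${\bf x}_{[n]}/(x_ix_j)$ has degree $|V(G)|-2$, so $I_c(G)^k$ is generated in the single degree $(|V(G)|-2)k$ for every $k\ge1$. Hence $I_c(G)^k$ has a linear resolution if and only if $\reg\,I_c(G)^k=(|V(G)|-2)k$, and this is the numerical criterion I would use throughout.

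First I would dispatch $(b)\Rightarrow(a)$: since $I_c(G)^k$ is generated in a single degree, the linear quotients property forces a linear resolution by \cite[Theorem 8.2.15]{HHBook}. For $(c)\Rightarrow(b)$ I would simply invoke \cite[Theorem B]{F25b}. Indeed, the hypothesis $c(G)=1$ says precisely that $G$ has exactly one connected component with more than one vertex, and that theorem asserts that in this case $I_c(G)^k$ has linear quotients for all $k\ge1$ (this also correctly covers any isolated vertices, as multiplying by the corresponding monomial power preserves linear quotients).

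The remaining implication $(a)\Rightarrow(c)$ is where Theorem \ref{thm:Ic(G)regdepth} does the work. Suppose some power $I_c(G)^k$ has a linear resolution, so that $\reg\,I_c(G)^k=(|V(G)|-2)k$. Arguing by contraposition, assume $c(G)\ge2$. Then the regularity formula gives a value strictly larger than $(|V(G)|-2)k$ for every $k\ge1$: in the range $k\ge c(G)-1$ it reads $(|V(G)|-2)k+c(G)-1$, whose excess $c(G)-1\ge1$ is positive, while in the complementary range $1\le k\le c(G)-2$ it reads $(|V(G)|-1)k$, exceeding $(|V(G)|-2)k$ by $k\ge1$. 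In either range no power can have a linear resolution, which forces $c(G)=1$.

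Finally, to justify the ``some/all'' equivalences I would observe that the argument shows the relevant property (linear resolution, and hence by the chain also linear quotients) holds for every $k\ge1$ when $c(G)=1$ and fails for every $k\ge1$ when $c(G)\ge2$; thus the existence of a single good power is equivalent to all powers being good, for both (a) and (b). I do not expect a genuine obstacle here: all the substance — the exact regularity function — was already secured in Theorem \ref{thm:Ic(G)regdepth}, so this corollary amounts to reading the characterization off that formula together with the known linear-quotients result for $c(G)=1$.
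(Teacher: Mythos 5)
Your proposal is correct and follows essentially the same route as the paper: both arguments deduce from (a) or (b) that $\reg\,I_c(G)^k=(|V(G)|-2)k$ for some $k$ and then read off $c(G)=1$ from the regularity formula of Theorem \ref{thm:Ic(G)regdepth}, while the converse is exactly the citation of \cite[Theorem B]{F25b}. You merely spell out more explicitly (and correctly) why both branches of the regularity formula exceed the linear bound when $c(G)\ge 2$, a step the paper leaves implicit.
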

	\begin{proof}
		If (a) or (b) holds, then $\reg\,I_c(G)^{k}=(|V(G)|-2)k$ for some $k$. Theorem \ref{thm:Ic(G)regdepth} implies that $c(G)=1$ and so (c) holds. Conversely, it is shown in \cite[Theorem B]{F25b} that (c) implies (a) and (b).
	\end{proof}
	
	The \textit{girth} of a graph $G$, denoted by $\girth(G)$, is the minimum size of an induced cycle of $G$ if $G$ contains some cycles, otherwise, if $G$ is a forest we set $\girth(G)=\infty$.
	
	Recall that the $k$th symbolic power of a squarefree monomial ideal $I\subset S$ is defined as $I^{(k)}=\bigcap_{i=1}^mP_{F_i}^k$, where $I=\bigcap_{i=1}^mP_{F_i}$ is the minimal primary decomposition of $I$. We have $I^k\subset I^{(k)}$ for all $k\ge1$, but in general equality does not hold.
	
	To avoid unnecessary technicalities, in the next result, proved by Hoa and Trung \cite[Theorem 2.9]{HT}, and by Minh and Vu \cite[Theorem 1.1]{MV23}, we consider only graphs without isolated vertices and having at least two edges.
	\begin{Theorem}
		Let $G$ be a finite simple graph without isolated vertices and having at least two edges. If $\girth(G)\ge5$, then $\reg\,J_c(G)=3$ and $\reg\,J_c(G)^k=\reg\,J_c(G)^{(k)}=2k$ for all $k\ge2$. Otherwise,
		$$
		\reg\,J_c(G)^k\ =\ \reg\,J_c(G)^{(k)}\ =\ \begin{cases}
			3k&\textit{if}\ \girth(G)=3,\\
			2k+1&\textit{if}\ \girth(G)=4,\\
			2k&\textit{if}\ \girth(G)=\infty,
		\end{cases}
		$$
		for all $k\ge1$.
	\end{Theorem}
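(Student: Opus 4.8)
The plan is to separate two regimes according to whether $G$ contains a triangle, since by Corollary \ref{cor:J_c(G)dual} we have $J_c(G)=I(G^c)+K_3(G)$, and this sum collapses to the edge ideal $I(G^c)$ precisely when $G$ is $K_3$-free, i.e. when $\girth(G)\ge4$. In the triangle-free case $J_c(G)=I(G^c)$ is an honest edge ideal, so the entire statement becomes a computation of $\reg\,I(G^c)^k$ and $\reg\,I(G^c)^{(k)}$; when $\girth(G)=3$ the degree-$3$ triangle generators dominate and one expects the coarser value $3k$.

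First I would settle the base case $k=1$ through Hochster's formula applied to $\Delta=G$, viewed as the one-dimensional complex whose Stanley--Reisner ideal is $J_c(G)$. Since every induced subcomplex $G_\sigma$ is again a graph, its reduced homology $\tilde H_\ell(G_\sigma;K)$ vanishes for $\ell\ge2$ and is governed by connectivity ($\ell=0$) and by the presence of an induced cycle ($\ell=1$). Hence $\reg\,J_c(G)=2$ if $G$ is a forest and $\reg\,J_c(G)=3$ as soon as $G$ carries a cycle; this already yields the value $\reg\,J_c(G)=3$ in the $\girth(G)\ge5$ regime and explains why it exceeds the value $2$ predicted by the eventual linear function $2k$ at $k=1$.

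The heart of the argument is the computation of $\reg\,J_c(G)^{(k)}$, for which I would use Takayama's description of the $\ZZ^n$-graded pieces of $H^i_{\m}(S/J_c(G)^{(k)})$ together with the explicit membership criterion $\mathbf{x}^{\mathbf a}\in J_c(G)^{(k)}\iff\sum_{i\notin e}a_i\ge k$ for every $e\in E(G)$. Because $\Delta=G$ is one-dimensional, each relevant degree complex is an induced subgraph, so again only $\tilde H_0$ and $\tilde H_1$ enter: the $\tilde H_0$-contributions, available since $G$ has at least two edges, produce the linear growth $2k$, while the $\tilde H_1$-contributions come from induced cycles and, after optimizing the weight vector $\mathbf a$ over a cycle of length $g$, pin down both the additive constant and the threshold in $k$. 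This is exactly where the girth enters: a triangle forces $3k$, an induced $4$-cycle a persistent $+1$ giving $2k+1$, while the smallest induced cycle of length $\ge5$ contributes only at $k=1$ and is absorbed for $k\ge2$, leaving $2k$.

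Finally I would match this to the ordinary powers. The lower bound $\reg\,J_c(G)^k\ge 3k$, respectively the bound forced by the top-degree generators in the triangle-free case, follows by exhibiting the product of $k$ extremal generators as a minimal generator; the upper bound I would obtain by an induction via Betti splittings of $J_c(G)^k$, in the spirit of the splittings of $I_c(G)^k$ used in Theorem \ref{thm:Ic(G)regdepth}, or, in the triangle-free case, by invoking the regularity theory of powers of edge ideals. The main obstacle is precisely the equality $\reg\,J_c(G)^k=\reg\,J_c(G)^{(k)}$: since $J_c(G)^k\subsetneq J_c(G)^{(k)}$ in general (equality for all $k$ would force, in the triangle-free case, $G^c$ to be bipartite), the two invariants cannot be deduced one from the other but must be squeezed between a common generator-theoretic lower bound and a common upper bound, and controlling $\reg$ along the exact sequence $0\to J_c(G)^k\to J_c(G)^{(k)}\to J_c(G)^{(k)}/J_c(G)^k\to0$ is the delicate point.
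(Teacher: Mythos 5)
First, a point of order: the paper does not prove this statement at all --- it is quoted verbatim from Hoa--Trung \cite[Theorem 2.9]{HT} and Minh--Vu \cite[Theorem 1.1]{MV23}, so there is no in-paper proof to compare against. Judged on its own merits, your outline for the \emph{symbolic} powers is sound and is in fact the strategy of the cited sources: $J_c(G)$ is the Stanley--Reisner ideal of $G$ viewed as a one-dimensional complex, the membership criterion $\sum_{i\notin e}a_i\ge k$ is correct, and Takayama/Hochster reduce everything to $\tilde H_0$ and $\tilde H_1$ of induced subgraphs, with the optimization over a shortest induced cycle producing exactly the stated case distinction (e.g.\ ${\bf a}=(k-1,k-1,k-1)$ on a triangle gives $\reg J_c(G)^{(k)}\ge 3k$). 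The $k=1$ computation via Hochster's formula is also correct.

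The genuine gap is the entire ordinary-powers half. Your proposed upper bound ``by Betti splittings in the spirit of Theorem \ref{thm:Ic(G)regdepth}'' does not transfer: that splitting rested on the factorization $I_c(G)={\bf y}_{[m]}I_1+{\bf x}_{[n]}I_2$ across connected components, and $J_c(G)=I(G^c)+K_3(G)$ admits no analogous product structure (nor does the answer here depend on connectivity of $G$). Likewise, ``invoking the regularity theory of powers of edge ideals'' in the triangle-free case only settles $\girth(G)=\infty$ (there $G$ is chordal, so $I(G^c)$ has linear resolution by Fr\"oberg and linear powers by Herzog--Hibi--Zheng); for $\girth(G)=4$ you need the upper bound $\reg I(G^c)^k\le 2k+1$, and for $\girth(G)\ge 5$ the bound $\reg I(G^c)^k\le 2k$ for $k\ge2$, neither of which is a general fact about edge ideals --- they require the specific structure of $G^c$ (gap-freeness, etc.) or the degree-complex analysis of \cite{MV23}. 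Finally, you correctly identify the equality $\reg J_c(G)^k=\reg J_c(G)^{(k)}$ as the delicate point, but the proposal offers no mechanism for it: the short exact sequence $0\to J_c(G)^k\to J_c(G)^{(k)}\to J_c(G)^{(k)}/J_c(G)^k\to 0$ only yields $\reg J_c(G)^k\le\max\{\reg J_c(G)^{(k)},\,\reg (J_c(G)^{(k)}/J_c(G)^k)+1\}$, and bounding the regularity of the quotient module is precisely the hard content of \cite{MV23}. As it stands, the argument establishes matching lower bounds and the symbolic-power values, but not the claimed values of $\reg J_c(G)^k$.
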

	
	In view of this result, we conclude the paper with two natural questions.
	\begin{Question}
		Are the functions $k\mapsto\!\depth\frac{S}{J_c(G)^k},\,k\mapsto\!\depth\frac{S}{J_c(G)^{(k)}}$ non-increasing?
	\end{Question}
	\begin{Question}
		Do we have $\reg\,I_c(G)^k=\reg\,I_c(G)^{(k)}$, for all $k\ge1$ ?
	\end{Question}\bigskip
	
	\textbf{Acknowledgment.}
	A. Ficarra was supported by the Grant JDC2023-051705-I funded by
	MICIU/AEI/10.13039/501100011033 and by the FSE+ and also by INDAM (Istituto Nazionale Di Alta Matematica). We would like to thank Adam Van Tuyl and Ayesha Asloob Qureshi for informing us about the papers \cite{ALS} and  \cite{HQS}, respectively.\bigskip

\end{document}